\documentclass{article}
\usepackage{mathrsfs, amsmath, amsthm, amssymb, bm, mathtools, thm-restate}
\usepackage{graphicx, xcolor, tikz}
\usetikzlibrary{arrows.meta, intersections, calc, shapes}
\usepackage{caption, subcaption}
\captionsetup{subrefformat=parens}
\usepackage{array}
\arraycolsep=1pt
\makeatletter
\def\th@plain{%
  \upshape 
}
\makeatother

\makeatletter
\renewenvironment{proof}[1][\proofname]{\par
  \pushQED{\qed}%
  \normalfont \topsep6\p@\@plus6\p@\relax
  \trivlist
  \item[\hskip\labelsep
        \bfseries
    #1\@addpunct{.}]\ignorespaces
}{%
  \popQED\endtrivlist\@endpefalse
}
\makeatother

\newtheorem{theorem}{Theorem}[section]
\newtheorem{lemma}{Lemma}[section]
\newtheorem{corollary}[theorem]{Corollary}
\theoremstyle{definition}
\newtheorem{definition}{Definition}

\usepackage[left=25mm, top=25mm, bottom=25mm, right=25mm]{geometry}
\setlength{\parskip}{0pt}

\usepackage[T1]{fontenc} 
\usepackage{paralist, tablists}
\usepackage[inline]{enumitem}
\usepackage[square, numbers, sort&compress]{natbib}

\usepackage[pdftex,%
  bookmarks=true,%
  bookmarksnumbered=true, 
  bookmarksopen=true, 
  plainpages=false,%
  pdfpagelabels,%
  colorlinks=true, 
  linkcolor=blue, 
  citecolor=blue,%
  anchorcolor=green,
  urlcolor=blue,
  breaklinks=true,
  hyperindex=true]{hyperref}

\newcommand{\etal}{et~al.\ }
\newcommand{\ie}{i.e.,\ }
\DeclareMathOperator {\dist}{dist}
\usepackage{marvosym,wasysym}

\usepackage{colortbl, tabularx, fontawesome}
\usepackage{cancel}
\definecolor{LGrey}{rgb}{.7,.7,.7}
\usepackage{threeparttablex}
\title{DP-3-coloring of planar graphs without certain cycles}
\author{Mengjiao Rao \qquad Tao Wang\footnote{{\tt Corresponding
author: wangtao@henu.edu.cn; iwangtao8@gmail.com} } \\
{\small Institute of Applied Mathematics}\\
{\small Henan University, Kaifeng, 475004, P. R. China}}
\begin{document}
\date{}
\maketitle
\begin{abstract}
DP-coloring is a generalization of list-coloring, which was introduced by Dvo\v{r}\'{a}k and Postle. Zhang showed that every planar graph with neither adjacent triangles nor 5-, 6-, 9-cycles is 3-choosable. Liu \etal showed that every planar graph without 4-, 5-, 6- and 9-cycles is DP-3-colorable. In this paper, we show that every planar graph with neither adjacent triangles nor 5-, 6-, 9-cycles is DP-3-colorable, which generalizes these results. Yu \etal gave three Bordeaux-type results by showing that: (i) every planar graph with the distance of triangles at least three and no 4-, 5-cycles is DP-3-colorable; (ii) every planar graph with the distance of triangles at least two and no 4-, 5-, 6-cycles is DP-3-colorable; (iii) every planar graph with the distance of triangles at least two and no 5-, 6-, 7-cycles is DP-3-colorable. We also give two Bordeaux-type results in the last section: (i) every planar graph with neither 5-, 6-, 8-cycles nor triangles at distance less than two is DP-3-colorable; (ii) every planar graph with neither 4-, 5-, 7-cycles nor triangles at distance less than two is DP-3-colorable. 
\end{abstract} 

\section{Introduction}
All graphs considered in this paper are finite, simple and undirected. A planar graph is a graph that can be embedded into the plane so that its edges meet only at their ends. A plane graph is a particular embedding of a planar graph into the plane. We set a plane graph $G = (V, E, F)$ where $V, E, F$ are the sets of vertices, edges, and faces of $G$, respectively. A vertex $v$ and a face $f$ are {\bf incident} if $v \in V(f)$. Two faces are {\bf adjacent} if they have at least one common edge. Call $v\in V(G)$ a $k$-vertex, or a $k^{+}$-vertex, or a $k^{-}$-vertex if its degree is equal to $k$, or at
least $k$, or at most $k$, respectively. The notions of a $k$-face, a $k^{+}$-face and a $k^{-}$-face are similarly defined.

A proper $k$-coloring of a graph $G$ is a mapping $f$: $V(G) \longrightarrow[k]$ such that $f(u)\neq f(v)$ whenever $uv\in E(G)$, where $[k]=\{1, 2, \dots, k\}$. The smallest integer $k$ such that $G$ has a proper $k$-coloring is called the {\bf chromatic number} of $G$, denoted by $\chi(G)$. Vizing \cite{MR0498216}, and independently Erd\H{o}s, Rubin, and Taylor \cite{MR593902} introduced list-coloring as a generalization of proper coloring. A {\bf list-assignment} $L$ gives each vertex $v$ a list of available colors $L(v)$. A graph $G$ is {\bf $L$-colorable} if there is a proper coloring $\phi$ of $G$ such that $\phi(v) \in L(v)$ for each $v\in V(G)$. A graph $G$ is $k$-choosable if $G$ is $L$-colorable for each $L$ with $\vert L(v)\vert \geq k$. The minimum integer $k$ such that $G$ is $k$-choosable is called the {\bf list-chromatic number} $\chi_{\ell}(G)$. 

For ordinary coloring, since every vertex has the same color set $[k]$, the operation of vertex identification is allowed. For list-coloring, the vertices may have different lists, so it is infeasible to identify vertices in general. To overcome this difficulty, Dvo\v{r}\'{a}k and Postle \cite{MR3758240} introduced DP-coloring under the name ``correspondence coloring'', showing that every planar graph without cycles of lengths 4 to 8 is 3-choosable. 

\begin{definition}\label{DEF1}
Let $G$ be a simple graph and $L$ be a list-assignment for $G$. For each vertex $v \in V(G)$, let $L_{v} = \{v\} \times L(v)$; for each edge $uv \in E(G)$, let $\mathscr{M}_{uv}$ be a matching between the sets $L_{u}$ and $L_{v}$, and let $\mathscr{M} = \bigcup_{uv \in E(G)}\mathscr{M}_{uv}$, called the {\bf matching assignment}. The matching assignment is called {\bf $k$-matching assignment} if $L(v) = [k]$ for each $v \in V(G)$. A {\bf cover} of $G$ is a graph $H_{L, \mathscr{M}}$ (simply write $H$) satisfying the following two conditions: 
\begin{enumerate}[label =(C\arabic*)]
\item the vertex set of $H$ is the disjoint union of $L_{v}$ for all $v \in V(G)$; 
\item the edge set of $H$ is the matching assignment $\mathscr{M}$.
\end{enumerate}
\end{definition}
Note that the matching $\mathscr{M}_{uv}$ is not required to be a perfect matching between the sets $L_{u}$ and $L_{v}$, and possibly it is empty. The induced subgraph $H[L_{v}]$ is an independent set for each vertex $v \in V(G)$. 

\begin{definition}
Let $G$ be a simple graph and $H$ be a cover of $G$. An {\bf $\mathscr{M}$-coloring} of $H$ is an independent set $\mathcal{I}$ in $H$ such that $\vert\mathcal{I} \cap L_{v}\vert = 1$ for each vertex $v \in V(G)$. The graph $G$ is {\bf DP-$k$-colorable} if for any list-assignment $\vert L(v)\vert \geq k$ and any matching assignment $\mathscr{M}$, it has an $\mathscr{M}$-coloring. The {\bf DP-chromatic number $\chi_{\mathrm{DP}}(G)$} of $G$ is the least integer $k$ such that $G$ is DP-$k$-colorable. 
\end{definition}

We mainly concentrate on DP-coloring of planar graphs in this paper. Dvo\v{r}\'{a}k and Postle \cite{MR3758240} noticed that $\chi_{\mathrm{DP}}(G)\leq5$ if $G$ is a planar graph, and $\chi_{\mathrm{DP}}(G)\leq3$ if $G$ is a planar graph with girth at least five. Several groups have given sufficient conditions for a planar graph to be DP-$3$-colorable, which extends the $3$-choosability of such graphs. 
\begin{theorem}[Liu \etal \cite{MR3886261}]
A planar graph is DP-$3$-colorable if it satisfies one of the following conditions:
\begin{enumerate}[label = (\arabic*)]
\item it contains no $3, 6, 7, 8$-cycles. 
\item it contains no $3, 5, 6$-cycles. 
\item it contains no $4, 5, 6, 9$-cycles. 
\item it contains no $4, 5, 7, 9$-cycles. 
\item the distance of triangles is at least two and it contains no $5, 6, 7$-cycles. 
\end{enumerate}
\end{theorem}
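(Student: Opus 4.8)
The plan is to prove all five parts by the discharging method applied to a minimal counterexample, working throughout with the cover formalism of Definition~\ref{DEF1} rather than with lists. Fix one of the parts, and suppose $(G, H)$ is a counterexample with $H = H_{L, \mathscr{M}}$, $\vert L(v)\vert \ge 3$, such that $G$ obeys the prescribed cycle restriction, $H$ admits no $\mathscr{M}$-coloring, and $\vert V(G)\vert + \vert E(G)\vert$ is as small as possible. First I would record the standard structural reductions that such a minimal $G$ must satisfy. Most basic is that $\delta(G) \ge 3$: a vertex $v$ of degree at most $2$ is matched on at most two edges, so after $\mathscr{M}$-coloring $G - v$ (which exists by minimality) at most two colors of $L(v)$ are blocked and $v$ can be colored, a contradiction. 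Similar minimality arguments give that $G$ is $2$-connected and that various small separating or pendant structures are excluded.

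The heart of each proof is a list of reducible configurations, and here the DP setting demands care: since $\mathscr{M}$ encodes matchings and not a common palette, I cannot contract or identify vertices as one does for choosability, and every reduction must be re-certified inside $H$. The basic tool is greedy extension: a set $S$ is reducible if, after $\mathscr{M}$-coloring $G - S$, its vertices can be ordered $u_1, \dots, u_t$ so that each $u_i$ has strictly fewer blocked colors than $\vert L(u_i)\vert$ when it is reached. When $S$ induces a cycle each of whose vertices has exactly one already-colored matched neighbor outside $S$, the residual lists have size $2$, and I would invoke the cover-theoretic cycle lemma: a cycle with $2$-element residual lists and full edge-matchings is $\mathscr{M}$-colorable unless the matchings compose around the cycle to a fixed-point-free permutation of a $2$-set (a transposition), while a non-full matching always leaves room. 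This pins down exactly which attachments of an allowed short cycle (triangles survive in parts~(3)--(5)) force a color and which leave a free choice, and lets me discard the bad local structures.

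With the reducible configurations fixed, I would run the discharging on Euler's formula. Assign to each vertex $v$ the charge $\deg(v) - 4$ and to each face $f$ the charge $\ell(f) - 4$, where $\ell(f)$ is the length of $f$; then
\[
\sum_{v \in V}\bigl(\deg(v) - 4\bigr) + \sum_{f \in F}\bigl(\ell(f) - 4\bigr) = -8,
\]
so the total charge is negative. The forbidden-cycle hypotheses force faces to be large: for the triangle-free parts~(1)--(2) there are no $3$-faces, so only $3$-vertices begin with negative charge, whereas in parts~(3)--(5) the negatively charged objects are exactly the $3$-faces and $3$-vertices, every other face having length at least $7$ and hence charge at least $3$. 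I would then design send/receive rules by which large faces and high-degree vertices donate charge to nearby $3$-faces and $3$-vertices, tailoring the rules to each part's forbidden lengths, and verify that after redistribution every vertex and every face has nonnegative charge. This contradicts the negative total and completes the part.

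The main obstacle, and where the five parts genuinely diverge, is the joint design of the reducible-configuration list and the discharging rules: the rules must be strong enough that every configuration left with negative final charge has already been shown reducible, yet conservative enough that no donor is overdrawn. I expect part~(5) --- distance of triangles at least two together with no $5, 6, 7$-cycles --- to be the hardest, since there triangles may cluster near the forbidden medium cycles and a direct neighbor-to-neighbor scheme will not cover each triangle's deficit of $1$; an amortized rule reaching into the second neighborhood of each triangle is likely needed. The genuinely DP-specific difficulty throughout is confirming that the matchings around these surviving short cycles cannot conspire to block every color, which is precisely the step the cycle lemma is meant to control and the one I anticipate being the most delicate to get uniformly right.
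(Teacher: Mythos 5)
This statement is not proved in the paper at all: it is Liu~et~al.'s theorem, quoted from \cite{MR3886261} as background, so there is no in-paper proof to compare your attempt against. Judged on its own terms, your proposal is a correct description of the standard framework (minimal counterexample, $\delta(G)\ge 3$ via the greedy extension in the cover, discharging on Euler's formula), but it is a template rather than a proof. The entire mathematical content of each of the five parts lies in (a) the explicit list of reducible configurations and the verification, inside the cover $H$, that each one extends, and (b) the explicit discharging rules together with the case analysis showing every vertex and face ends with nonnegative charge. You defer both of these (``I would then design send/receive rules\dots'', ``the joint design of the reducible-configuration list and the discharging rules'' is named as the obstacle), so nothing is actually established for any of the five parts.

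Two smaller points. First, your ``cycle lemma'' is stated too loosely to be invoked safely: for a cycle with residual lists of size $2$ and full matchings, whether an $\mathscr{M}$-coloring exists depends on the parity of the cycle length and on whether the composed permutation around the cycle is the identity or the transposition, and a non-full matching on even one edge indeed always permits a coloring; you would need to state and prove the precise version before using it, since in the DP setting one cannot fall back on Alon--Tarsi or kernel arguments. (Compare Lemma~\ref{DP-GREEDY} in this paper, which is the black box Liu~et~al.\ actually formalize for these greedy extensions.) Second, in part~(5) the graph may contain $4$-cycles, so the assertion that ``every other face has length at least $7$'' is false there; $4$-faces carry charge $0$ under your normalization, so this does not break the plan, but it signals that the face-structure analysis has not been carried out. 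As it stands the proposal identifies the right method but proves none of the five statements.
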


\begin{theorem}[Liu \etal \cite{MR3969021}]
If $a$ and $b$ are distinct values from $\{6, 7, 8\}$, then every planar graph without $4$-, $a$-, $b$-, $9$-cycles is DP-$3$-colorable. 
\end{theorem}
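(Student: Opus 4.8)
The plan is to argue by contradiction with the discharging method. Suppose the statement fails and let $(G,H)$ minimise $|V(G)|$ among counterexamples, where $G$ is a plane graph without $4$-, $a$-, $b$-, $9$-cycles for the relevant pair $\{a,b\}\subset\{6,7,8\}$, $H=H_{L,\mathscr{M}}$ is a cover with $|L(v)|=3$ everywhere, and $H$ has no $\mathscr{M}$-coloring. The three pairs are handled by the same scheme, branching only where the lone admissible ``medium'' face (the element of $\{6,7,8\}$ not equal to $a$ or $b$) enters. By Euler's formula, the initial charges $\mu(v)=2d(v)-6$ and $\mu(f)=d(f)-6$ sum to $\sum_x\mu(x)=-12<0$. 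Every vertex of degree at least $3$ has $\mu(v)\ge 0$ (equality iff degree $3$), so once $\delta(G)\ge 3$ is shown all negative charge sits on faces; because $4$- and $9$-cycles are forbidden and (as noted below) $G$ may be taken $2$-connected so that faces are cycles, the only faces present are $3$-faces (charge $-3$), $5$-faces (charge $-1$), the single medium face, and faces of length $\ge 10$. I would record immediately that no two $3$-faces share an edge, since edge-sharing triangles contain a $4$-cycle.

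The reducibility phase fixes the local structure. First, $\delta(G)\ge 3$: a $\le 2$-vertex could be deleted, the smaller instance coloured by minimality, and the colour of the vertex chosen against its at most two matching constraints; and a standard block argument lets me assume $G$ is $2$-connected, so every face is a cycle and there are no $4$- or $9$-faces. The engine for the remaining reductions is the DP-analogue of degree-choosability: if $G[S]$ is connected and, after each vertex of $S$ has its list shrunk by the colours of its already-coloured neighbours outside $S$, some vertex of $S$ retains more available colours than its degree within $S$ (equivalently $S$ is not a tight Gallai tree carrying a bad matching), then any $\mathscr{M}$-coloring of $G-S$ extends across $S$. I would use this to forbid the dense triangle/$5$-face clusters: around a suspected configuration one deletes a carefully chosen $S$ and checks that the residual cover on $S$ always has such a surplus vertex, whence $(G,H)$ would be colourable, a contradiction. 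The hypothesis barring $9$-cycles is exactly what caps the length of triangle-and-medium-face chains, guaranteeing these residual instances are never the tight bad configurations that would block extension.

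With the forbidden configurations in hand I would design rules moving charge from the positive elements -- vertices of degree $\ge 4$ (charge $2d(v)-6\ge 2$) and faces of length $\ge 7$ (charge $d(f)-6\ge 1$) -- to the negative $3$-faces and $5$-faces; the $6$-face admissible only in the $\{7,8\}$ case has charge $0$ and is inert. Typically each $3$-face draws a fixed amount across each incident edge from the neighbouring face and from its incident $4^{+}$-vertices, and each $5$-face draws smaller amounts similarly. The verification that the final charge $\mu^{*}$ is nonnegative everywhere leans on the reducibility bounds: a $4^{+}$-vertex is incident to few enough $3$-/$5$-faces to stay nonnegative, a $3$-face collects exactly its deficit of $3$ because its neighbouring faces are large (no adjacent triangle) and its vertices are constrained, and a face of length $\ge 7$ loses at most its surplus $d(f)-6$. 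Then $\sum_x\mu^{*}(x)=\sum_x\mu(x)=-12<0$ contradicts $\mu^{*}(x)\ge 0$ for all $x$.

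I expect the main obstacle to be the reducibility, not the discharging bookkeeping. Unlike list-colouring, one cannot merely count surviving colours: a configuration that looks extendable can fail precisely when the matchings on its edges line up adversarially, so every candidate must be tested against the Gallai-tree/bad-matching obstruction, and the deleted set $S$ must be chosen so the residual cover never realises it. A secondary difficulty is keeping the argument uniform across the three pairs $\{a,b\}$: the medium face has charge $0$, $+1$, or $+2$ according as its length is $6$, $7$, or $8$, so the rules and the final check for faces and vertices bordering it must be tuned per case; I would isolate those into a short separate lemma to avoid duplicating the bulk of the proof.
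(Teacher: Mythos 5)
The statement you are asked to prove is not proved in this paper at all: it is quoted from Liu, Loeb, Rolek, Yin and Yu \cite{MR3969021}, so there is no in-paper proof to compare against. Judged on its own terms, your proposal is a plan for a proof rather than a proof. The global frame is the right one and matches the machinery this paper itself deploys for its own theorems (a minimal counterexample, the charges $\mu(v)=2d(v)-6$ and $\mu(f)=d(f)-6$ summing to $-12$, $\delta(G)\ge 3$, reducibility via the nearly-degenerate extension lemma \autoref{DP-GREEDY} and the DP analogue of the Gallai-tree obstruction \autoref{NONDP}). But in a discharging proof the content is precisely the two things you defer: an explicit list of reducible configurations, each verified to be reducible against \emph{every} matching assignment, and explicit discharging rules with a complete case check that every vertex and every face ends with nonnegative charge. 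Phrases such as ``I would use this to forbid the dense triangle/$5$-face clusters'' and ``typically each $3$-face draws a fixed amount'' leave exactly the hard part undone; as you yourself observe, the reducibility is where the difficulty lies, and nothing in the proposal establishes a single reducible configuration or a single rule.

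Two further concrete points. First, the three cases $\{a,b\}\subset\{6,7,8\}$ are structurally more different than your sketch allows: when $6\in\{a,b\}$ a $3$-face cannot be adjacent to a $5$-face (their union contains a $6$-cycle or a forbidden $4$-cycle), when $8\in\{a,b\}$ two $5$-faces cannot be adjacent (their union contains an $8$-cycle or a forbidden $4$-cycle), and only in the case $\{a,b\}=\{6,7\}$ can $5$-faces chain together. So the admissible clusters of small faces, hence the reducible configurations and the rules, genuinely differ in each case and cannot be confined to a ``short separate lemma'' about the one medium face length. Second, the reduction to $2$-connectivity is not the one-line block argument of ordinary coloring: after DP-coloring one block, the cut vertex enters the next block with a residual list of size one, which minimality does not directly handle; this can be repaired, but it needs an argument, or one avoids it altogether and reasons about facial walks rather than facial cycles, as this paper does in \autoref{LS}.
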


Zhang and Wu \cite{MR2159447} showed that every planar graph without 4-, 5-, 6- and 9-cycles is 3-choosable. Zhang \cite{MR3033651} generalized this result by showing that every planar graph with neither adjacent triangles nor 5-, 6- and 9-cycles is 3-choosable. Liu \etal \cite{MR3886261} showed that every planar graph without 4-, 5-, 6- and 9-cycles is DP-3-colorable. In this paper, we first extend these results by showing the following theorem. 

\begin{restatable}{theorem}{MRESULTa}\label{MRESULTa}
Every planar graph with neither adjacent triangles nor 5-, 6- and 9-cycles is DP-$3$-colorable. 
\end{restatable}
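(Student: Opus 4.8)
The plan is to argue by contradiction via a minimal counterexample together with a discharging argument, following the template that has become standard for DP-$3$-colorability of sparse planar graphs. Let $(G, H)$ be a counterexample with $|V(G)| + |E(G)|$ minimum: thus $G$ is a plane graph with neither adjacent triangles nor $5$-, $6$-, $9$-cycles, $H = H_{L,\mathscr{M}}$ is a cover with $|L(v)| = 3$ for every $v$ (we may assume equality after shrinking lists if necessary), and $H$ has no $\mathscr{M}$-coloring, while every proper subgraph admits one for any cover. First I would record the absence of small faces: since $G$ has no $5$-, $6$-, or $9$-cycles it has no $5$-, $6$-, or $9$-faces, and since it has no adjacent triangles no two $3$-faces share an edge. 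Hence every face has size in $\{3,4,7,8\} \cup \{10,11,\dots\}$ and the $3$-faces are pairwise edge-disjoint.

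Next I would establish the reducible configurations. The easy facts come first: $G$ is $2$-connected and $\delta(G) \ge 3$, since a vertex of degree at most $2$ can always be colored last (its at most two neighbors block at most two of its three colors through the matchings). The heart of this stage is a catalogue of forbidden local configurations around $3$-vertices and $3$-faces---for instance a $3$-vertex incident to certain combinations of $3$- and $4$-faces, or short chains of such structures. For each configuration I would delete or suppress the offending part, obtain an $\mathscr{M}$-coloring of the smaller cover by minimality, and then extend it back. The extension is the DP-analogue of the choosability arguments: rather than identifying vertices (which is illegal for covers), I count, for each uncolored vertex, how many of its three colors are blocked by matched colors on already-colored neighbors; a configuration is reducible precisely when these counts always leave a free color, possibly after a short greedy ordering or a small case analysis over which matched colors coincide across distinct $\mathscr{M}_{uv}$.

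Then assign initial charge $\mu(x) = d(x) - 4$ to each vertex and each face; by Euler's formula $\sum_x \mu(x) = -8$. I would design discharging rules sending charge from the only objects with surplus---vertices of degree $\ge 5$ and faces of size $\ge 7$---to the deficient $3$-vertices and $3$-faces, while checking that $4$-vertices and $4$-faces, which start neutral, stay nonnegative. Because $4$-cycles are permitted here (unlike in the earlier theorem forbidding $4$-cycles), the delicate point is controlling how much a large face must donate to the $3$-faces and $3$-vertices on its boundary when $4$-faces can cluster around it; the rules must apportion the charge of a $7^{+}$-face finely across its incident deficient neighbors. After discharging I would verify $\mu^{*}(x) \ge 0$ for every vertex and face, contradicting the total $-8$.

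I expect the main obstacle to be the reducibility analysis rather than the discharging bookkeeping: proving that the right configurations are reducible requires careful DP-coloring extension arguments in which one must track the individual matchings $\mathscr{M}_{uv}$ and cannot fall back on the vertex identification or Kempe-chain maneuvers available in the list-coloring proofs of Zhang and of Zhang--Wu. Producing a configuration list that is simultaneously rich enough to force nonnegative final charge and entirely reducible---especially the configurations mixing $3$-faces, $4$-faces, and low-degree vertices that the presence of $4$-cycles makes possible---is where the real work will concentrate.
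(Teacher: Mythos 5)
Your outline is the standard minimal-counterexample-plus-discharging template, but two of the few concrete claims you make are unjustified and would derail the face analysis. Minimality of a non-DP-$3$-colorable graph does not give $2$-connectivity: if $v$ is a cut vertex you can color one side, but extending to the other side amounts to coloring a graph in which $v$ is precolored (equivalently, some neighbours of $v$ retain only two available colours), and that situation is not covered by the minimality hypothesis. Consequently you cannot conclude that every face boundary is a cycle, and your claim that all faces have size in $\{3,4,7,8\}\cup\{10,11,\dots\}$ is false: two triangles meeting at a cut vertex bound a $6$-face (they are not \emph{adjacent} triangles since they share no edge), and $7$-, $8$- and $9$-faces with non-cycle boundaries also occur. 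The paper's proof has to treat exactly these objects (see \autoref{LS} and rules R2, R4, R5), so this is not a cosmetic omission; also note that with your normalization $\mu(x)=d(x)-4$ the $3$-faces are deficient, which only adds to the burden.

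The larger gap is that the proposal contains no actual reducible configuration and no actual discharging rule, and the hard spots are not where you place them. The critical configurations are $10$-faces incident with ten $3$-vertices and adjacent to four or five $3$-faces (the poor, bad, special and semi-special faces of the paper); controlling them requires (i) the structural theorem that in a minimal non-DP-$3$-colorable graph a $2$-connected induced subgraph all of whose vertices have degree $3$ is a cycle or a complete graph (\autoref{NONDP}), which forces every $4^{-}$-face controlled by such a $10$-face to carry a $4^{+}$-vertex, and (ii) the ``nearly $2$-degenerate'' path reducibility (\autoref{DP-GREEDY} and \autoref{Reducible}). Moreover the discharging cannot be completed with charge flowing only from large faces and high-degree vertices to $3$-vertices and $3$-faces: the paper needs second-order, face-to-face transfers (rules R9 and R10, in which certain $4$-faces subsidise adjacent poor and semi-special $10$-faces) to rescue the tight $10$-face cases. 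Your plan neither identifies these configurations nor anticipates that kind of rule, so the part you expect to be routine bookkeeping is precisely where a key idea is still missing.
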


The {\bf distance of two triangles} $T$ and $T'$ is defined as the value $\min\{\dist(x, y) : x \in T \mbox{ and } y \in T'\}$, where $\dist(x, y)$ is the distance of the two vertices $x$ and $y$. In general, we use $\dist^{\triangledown}$ to denote the minimum distance of two triangles in a graph. Yin and Yu \cite{MR3954054} gave the following Bordeaux condition for planar graphs to be DP-3-colorable. 
\begin{theorem}[Yin and Yu \cite{MR3954054}]\label{YY}
A planar graph is DP-$3$-colorable if it satisfies one of the following two conditions:
\begin{enumerate}[label = (\arabic*)]
\item the distance of triangles is at least three and it contains no $4, 5$-cycles. 
\item the distance of triangles is at least two and it contains no $4, 5, 6$-cycles. 
\end{enumerate}
\end{theorem}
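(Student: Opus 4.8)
The plan is to argue by contradiction with a minimal counterexample and a discharging argument, handling the two parts in parallel since they differ only in their numerology. Treating part~(1) (resp.\ part~(2)), let $G$ be a planar graph meeting the stated hypotheses that is not DP-$3$-colorable, chosen with $\lvert V(G)\rvert$ minimum; fix a plane embedding, a list-assignment $L$ with $\lvert L(v)\rvert \ge 3$ for all $v$, and a matching assignment $\mathscr{M}$ for which no $\mathscr{M}$-coloring exists. Note the class is closed under taking subgraphs: deleting vertices or edges neither creates short cycles nor decreases the distance between triangles, so every proper subgraph of $G$ inherits the hypotheses and, by minimality, is DP-$3$-colorable.

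First I would collect reducibility facts. A vertex $v$ of degree at most $2$ is reducible: extend an $\mathscr{M}$-coloring of $G-v$, and since each incident edge blocks at most one colour of $L_{v}$ through its matching, at least one of the $\ge 3$ colours remains available for $v$; hence $\delta(G)\ge 3$. Standard arguments also let us assume $G$ is $2$-connected, so that every face is bounded by a cycle. Because $G$ has no $4$- or $5$-cycle, each non-triangular face then has length at least $6$ in part~(1), and, as $6$-cycles are excluded too, length at least $7$ in part~(2). I would next forbid a short list of local configurations involving $3$-vertices and triangles by the usual delete--recolour--extend template; the DP-counting matches the list-colouring one exactly, because a matching forbids at most one colour across each edge, so any configuration reducible for list-$3$-colouring remains reducible here.

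For the discharging phase, give every element the charge $\mathrm{ch}(x)=\lvert x\rvert-4$, where $\lvert x\rvert$ denotes the degree of a vertex or the length of a face. By Euler's formula, $\sum_{v\in V(G)}\mathrm{ch}(v)+\sum_{f\in F(G)}\mathrm{ch}(f)=-8$. The only elements carrying negative charge are $3$-vertices and triangles, each with charge $-1$, while all $4^{+}$-vertices and all $6^{+}$-faces (resp.\ $7^{+}$-faces) begin nonnegative. I would design rules that route charge from the large faces (and, where helpful, from high-degree vertices) toward the triangles and $3$-vertices, for example letting each large face send a fixed share across every incident edge that borders a triangle and to each incident $3$-vertex, with the deficit of a triangle and of its incident $3$-vertices treated together. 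The objective is to prove that after discharging every element has charge at least $0$, contradicting the total $-8$.

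The crux, and the step I expect to be the main obstacle, is verifying this nonnegativity, and it is exactly where the triangle-distance hypothesis is used. Since triangles lie at distance at least $3$ in part~(1) and at least $2$ in part~(2), no two triangles share a vertex or lie on a common short path, so each triangle is bounded by pairwise-distinct large faces from which it can gather its deficit, and a $3$-vertex off the triangles draws from its three incident large faces. The delicate situations are a large face that simultaneously borders a triangle and carries several $3$-vertices: here the excluded cycle lengths bound how many triangle-edges and $3$-vertices a face of a given length can host, so its surplus $\lvert f\rvert-4$ is enough to meet every demand. The two parts balance in the same way through different constants, namely part~(1) compensates for the smaller surplus of $6^{+}$-faces by placing triangles farther apart, whereas part~(2) permits closer triangles but guarantees the richer $7^{+}$-faces. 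Verifying nonnegativity in each local case yields the contradiction and completes the proof.
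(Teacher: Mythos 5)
This statement (\autoref{YY}) is a result of Yin and Yu quoted from \cite{MR3954054}; the present paper states it without proof, so there is no in-paper argument to compare against. Judged on its own merits, your proposal is only a template for a discharging proof, and it contains one claim that is genuinely false and that hides exactly the difficulty such theorems must overcome. You assert that ``the DP-counting matches the list-colouring one exactly, because a matching forbids at most one colour across each edge, so any configuration reducible for list-$3$-colouring remains reducible here.'' This transfer principle fails: for example, even cycles are $2$-choosable but not DP-$2$-colorable, and more generally list-coloring reducibility arguments routinely exploit the identity of colors (intersections of lists, counting how many neighbours can share a colour, permuting colours on a component), none of which survives the passage to an arbitrary matching assignment. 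This is precisely why the DP versions of these theorems are separate results (note the open entries in \autoref{T1}) and why the literature, including this paper, replaces list-type reductions by matching-robust ones such as the ``nearly $(k-1)$-degenerate'' criterion of \autoref{DP-GREEDY} and its refinement \autoref{Reducible}, which only use the fact that a vertex with at most $k-1$ earlier-coloured neighbours always retains an available colour.

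Beyond that, the proposal never specifies the reducible configurations, the discharging rules, or the case analysis; the paragraph you yourself flag as ``the crux'' simply asserts that every face has enough surplus, which is the entire content of the theorem. The incidental claim that a minimal counterexample may be assumed $2$-connected (so that all faces are bounded by cycles) is also not justified for DP-coloring --- one cannot permute colours on a block to make colourings agree at a cut vertex --- and the actual proofs in this area instead handle face boundaries that are closed walks rather than cycles. In short, the overall strategy (minimal counterexample, $\delta\ge 3$, Euler charge $\lvert x\rvert-4$ summing to $-8$, discharging toward triangles and $3$-vertices) is the standard and correct frame, but the two loads it must bear --- DP-specific reducible configurations and a verified set of rules --- are respectively unsound and absent.
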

\autoref{YY} implies the following new results on 3-choosability. 
\begin{corollary}
A planar graph is 3-choosable if it satisfies one of the following conditions:
\begin{enumerate}[label = (\arabic*)]
\item the distance of triangles is at least three and it contains no $4, 5$-cycles. 
\item the distance of triangles is at least two and it contains no $4, 5, 6$-cycles. 
\end{enumerate}
\end{corollary}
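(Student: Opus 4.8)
The plan is to derive this corollary directly from \autoref{YY} by observing that DP-$3$-colorability is strictly stronger than $3$-choosability. Concretely, I would show that every $k$-choosability instance can be encoded as a DP-coloring instance, so that DP-$k$-colorability forces $k$-choosability; the corollary then follows because each of its two hypotheses coincides verbatim with the corresponding condition of \autoref{YY}.

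First I would fix an arbitrary list-assignment $L$ with $\vert L(v)\vert \geq 3$ for every $v \in V(G)$ and build from it a canonical matching assignment $\mathscr{M}$. For each edge $uv \in E(G)$, put $\big((u, c), (v, c)\big) \in \mathscr{M}_{uv}$ precisely when $c \in L(u) \cap L(v)$; that is, $\mathscr{M}_{uv}$ is the ``identity'' matching that joins equal colors in the two lists and leaves all remaining vertices of $L_{u}$ and $L_{v}$ unmatched. This is a legitimate matching assignment in the sense of \autoref{DEF1}, and the resulting cover $H = H_{L, \mathscr{M}}$ carries the same lists $L(v)$ at every vertex.

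Next I would verify that $\mathscr{M}$-colorings of $H$ are exactly the proper $L$-colorings of $G$. An $\mathscr{M}$-coloring $\mathcal{I}$ selects one vertex $(v, \phi(v)) \in L_{v}$ for each $v$, hence determines a map $\phi$ with $\phi(v) \in L(v)$; independence of $\mathcal{I}$ means that for every edge $uv$ the pair $(u, \phi(u))$, $(v, \phi(v))$ is not an edge of $H$, which by construction fails exactly when $\phi(u) = \phi(v) \in L(u) \cap L(v)$, equivalently when $\phi(u) \neq \phi(v)$. Thus $\mathcal{I}$ is independent iff $\phi$ is a proper coloring, so $H$ admits an $\mathscr{M}$-coloring iff $G$ is $L$-colorable. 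Consequently DP-$3$-colorability of $G$ implies $3$-choosability of $G$.

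Finally I would close the argument: a planar graph satisfying condition~(1) (resp.\ (2)) of the corollary satisfies exactly condition~(1) (resp.\ (2)) of \autoref{YY}, hence is DP-$3$-colorable, and by the reduction above it is therefore $3$-choosable. I do not expect any genuine obstacle here, since the statement is a formal consequence of the containment of list-coloring in DP-coloring; the only point needing care is the bookkeeping in the correspondence between $\mathscr{M}$-colorings and $L$-colorings, in particular checking that the unmatched vertices of the cover impose no spurious constraints and that the identity matching reproduces precisely the ``equal color on adjacent vertices'' conflicts of ordinary list-coloring.
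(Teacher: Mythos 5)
Your proposal is correct and matches the paper's (implicit) argument: the paper derives this corollary from \autoref{YY} purely via the standard fact that DP-$3$-colorability implies $3$-choosability, which is exactly the identity-matching reduction you spell out. The only quibble is a garbled clause (``\dots fails exactly when $\phi(u)=\phi(v)$, equivalently when $\phi(u)\neq\phi(v)$''), but the intended equivalence and the conclusion are right.
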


\begin{table}
\centering
\caption{List-3-coloring and DP-3-coloring.}
\label{T1}
\begin{threeparttable}[b]
\begin{tabular}{|c|c|c|c|c|c|c|c|c|r|r|}
\hline
$\dist^{\triangledown}$&$3$&$4$&$5$&$6$&$7$&$8$&$9$&$10$&list-3-coloring&DP-3-colorable\\
\hline\rowcolor{LGrey}
        &\xcancel{3}&\xcancel{4}&  &  &  &  &  &  &Thomassen, 1995 \cite{MR1328294}&Dvo\v{r}\'{a}k, Postle, 2018 \cite{MR3758240}\\
        &\xcancel{3}&  &\xcancel{5}&\xcancel{6}&  &  &  &  &Lam, Shiu, Song, 2005 \cite{MR2137572}&Liu \etal 2019 \cite{MR3886261}\\\rowcolor{LGrey}
        &\xcancel{3}&  &  &\xcancel{6}&\xcancel{7}&  &  &  &Dvo\v{r}\'{a}k \etal 2010 \cite{MR2680225}&\faQuestion\\
        &\xcancel{3}&  &  &\xcancel{6}&\xcancel{7}&  &\xcancel{9}&  &Zhang, Xu, 2004 \cite{MR2038769}&\\
\rowcolor{LGrey}
        &\xcancel{3}&  &  &\xcancel{6}&\xcancel{7}&\xcancel{8}&  &  &Lidick\'y, 2009 \cite{MR2527000}&Liu \etal 2019 \cite{MR3886261}\\
        &\xcancel{3}&  &  &  &\xcancel{7}&\xcancel{8}&  &  &Dvo\v{r}\'{a}k \etal 2009 \cite{MR2552620}&\faQuestion\\\rowcolor{LGrey}
        &  &\xcancel{4}&\xcancel{5}&\xcancel{6} &\xcancel{7} &\xcancel{8}&&&Dvo\v{r}\'{a}k, Postle, 2018 \cite{MR3758240}&\faQuestion\\
        &  &\xcancel{4}&\xcancel{5}&\xcancel{6}&  &  &\xcancel{9}&    &Zhang, Wu, 2005 \cite{MR2159447}&Liu \etal 2019 \cite{MR3886261}\\\rowcolor{LGrey}
        &  &\xcancel{4}&\xcancel{5}&  &\xcancel{7}&  &\xcancel{9}&    &Zhang, Wu, 2004 \cite{MR2098488}&Liu \etal 2019 \cite{MR3886261}\\
        &  &\xcancel{4}&\xcancel{5}&  &\xcancel{7}&  &  &\xcancel{10}&Zhang, 2012 \cite{MR2976370}&\faQuestion\\
\rowcolor{LGrey}
        &  &\xcancel{4}&\xcancel{5}&  &  &\xcancel{8}&\xcancel{9}&    &Wang, Lu, Chen, 2010 \cite{MR2558977}&\faQuestion\\
        &  &\xcancel{4}&  &\xcancel{6}&\xcancel{7}&  &\xcancel{9}&    &Wang, Lu, Chen, 2008 \cite{MR2381380}&Liu \etal 2019 \cite{MR3969021}\\\rowcolor{LGrey}
        &  &\xcancel{4}&  &\xcancel{6}&  &\xcancel{8}&\xcancel{9}&    &Shen, Wang, 2007 \cite{MR2349483}&Liu \etal 2019 \cite{MR3969021}\\
        &  &\xcancel{4}&  &  &\xcancel{7}&\xcancel{8}&\xcancel{9}&    &Wang, Wu, Shen, 2011 \cite{MR2746963}&Liu \etal 2019 \cite{MR3969021}\\\rowcolor{LGrey}
        &  &\xcancel{4}&\xcancel{5}&  &  &\xcancel{8}&  &\xcancel{10}&Wang, Wu, 2011 \cite{MR2856858}&\faQuestion\\
$\geq 3$&  &\xcancel{4}&\xcancel{5}&  &  &  &  &    &derived from \cite{MR3954054}&Yin, Yu, 2019 \cite{MR3954054}\\
\rowcolor{LGrey}
$\geq 2$&  &\xcancel{4}&\xcancel{5}&\xcancel{6}&  &  &  &    &derived from \cite{MR3954054}&Yin, Yu, 2019 \cite{MR3954054}\\
$\geq 2$&  &  &\xcancel{5}&\xcancel{6}&\xcancel{7}&  &  &    &Li, Chen, Wang, 2016 \cite{MR3558005}&Liu \etal 2019 \cite{MR3886261}\\\rowcolor{LGrey}
$\geq 2$&  &  &\xcancel{5}&\xcancel{6}&  &\xcancel{8}&  &    &Zhang, Sun, 2008 \cite{MR2428692}&this paper\\
$\geq 3$&  &  &\xcancel{5}&\xcancel{6}&  &  &  &\xcancel{10}&Zhang, 2016 \cite{MR3492654}&\\
\rowcolor{LGrey}
$\geq 3$&  &\xcancel{4}&  &  &\xcancel{7}&  &\xcancel{9}&   &Li, Wang, 2016 \cite{Li2016}&\\
$\geq 2$&  &\xcancel{4}&\xcancel{5}&  &\xcancel{7}&  &  &   &Han, 2009 \cite{Han2009}&this paper\\
\rowcolor{LGrey}
\hline
\end{tabular}
\end{threeparttable}
\label{TABLE}
\end{table}


The following are two Bordeaux-type results on $3$-choosability. 
\begin{theorem}[Zhang and Sun \cite{MR2428692}]\label{ZHS}
Every planar graph with neither 5-, 6-, 8-cycles nor triangles at distance less than two is 3-choosable.
\end{theorem}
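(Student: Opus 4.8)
The plan is to argue by minimal counterexample and discharging, the same method the paper uses for its DP-analogues; since $3$-choosability is the special case of DP-coloring in which each matching $\mathscr{M}_{uv}$ joins equal colours, I phrase everything in list-colouring language. Suppose the theorem fails, and let $G$ be a plane graph with a list-assignment $L$, $\vert L(v)\vert \geq 3$ for all $v$, such that $G$ satisfies the hypotheses (no $5$-, $6$-, $8$-cycles and $\dist^{\triangledown} \geq 2$) but has no proper $L$-coloring, chosen so that $\vert V(G)\vert + \vert E(G)\vert$ is minimum. The hypotheses are monotone under taking subgraphs — deleting vertices or edges neither creates a forbidden cycle nor decreases $\dist^{\triangledown}$ — so every proper subgraph of $G$ is $L$-colorable. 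Standard minimality arguments then show $G$ is $2$-connected (hence every face is bounded by a cycle) and has minimum degree at least $3$: a vertex of degree at most $2$ could be deleted, the rest coloured by minimality, and then re-coloured from the at least one colour still available to it.

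Next I would extract the local structure forced by the forbidden lengths; this is where the hypotheses pay off. Two observations are decisive. First, no triangle is adjacent to a $4$-face: if a $3$-face $xyz$ shared an edge, say $xy$, with a $4$-face $xyab$, then $x, b, a, y, z$ would close up into a $5$-cycle, which is forbidden. Since $5$-, $6$-, and $8$-faces are excluded outright and $\dist^{\triangledown} \geq 2$ forbids an adjacent triangle, every face sharing an edge with a triangle is a $7^{+}$-face. Second, no $3$-vertex lies on three $4$-faces: the three ``outer'' edges of those faces would form a $6$-cycle. Consequently every $3$-vertex is incident to at least one $7^{+}$-face (and to two of them if it lies on a triangle). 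Alongside these I would establish the usual short list of reducible configurations — clusters of adjacent $3$-vertices, sparse $4$-faces, and the like — by the delete-and-extend technique, each a finite check that some uncoloured vertex always retains a free colour.

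For the discharging, assign each vertex and face $x$ the charge $\mu(x) = d(x) - 4$; by Euler's formula $\sum_{x}\mu(x) = -8$, and the only objects starting negative are $3$-vertices and $3$-faces, each at $-1$, while $4$-vertices and $4$-faces are neutral and the surplus sits on $5^{+}$-vertices and on the admissible large faces ($7$-, $9$-, $10^{+}$-faces). The rules would push this surplus to the deficient objects: a large face pays the triangles it borders and the $3$-vertices it carries, with help from $5^{+}$-vertices. The key feasibility point is that the forbidden lengths bound how many deficient neighbours a donor can have — for instance, because triangles are pairwise at distance at least $2$, a $7$-face is adjacent to at most two of them, so its charge $+3$ comfortably covers them. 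One then verifies $\mu^{*}(x) \geq 0$ for every vertex and face after discharging, contradicting the total $-8$.

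The main obstacle will be calibrating the rules so that the large faces are never over-drained. A $7$-face has only $+3$ to spend, yet it may simultaneously border triangles and carry several $3$-vertices, so a naive ``$+1$ per deficient neighbour'' rule fails. Making the accounting balance requires using the reducible-configuration lemmas to bound, on the boundary of each large face, the number and the clustering of $3$-vertices (and to split each $3$-vertex's deficit across a second incident $7^{+}$-face whenever one exists), together with the triangle-distance hypothesis to cap the triangles per face. Verifying that this simultaneously clears every $3$-vertex and every $3$-face while leaving each donor non-negative is the technical heart of the argument, and is exactly the part the forbidden-cycle and $\dist^{\triangledown} \geq 2$ conditions are tailored to make possible.
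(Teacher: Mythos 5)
There is a genuine gap: your text is a plan for a discharging proof, not a proof, and the part you defer (``calibrating the rules so that the large faces are never over-drained \dots\ the technical heart of the argument'') is exactly where the work lies. The paper proves the stronger DP-version of this statement (\autoref{MRESULTb}), and the binding constraint there is not the one you identify. You estimate that a $7$-face must pay for at most two adjacent triangles; in fact a $7$-face can be adjacent to \emph{no} triangle at all, since a $7$-cycle and a $3$-cycle sharing an edge form a forbidden $8$-cycle. The real danger is a $7$-face incident with seven $3$-vertices, six of which also lie on $4$-faces: under any normalization such a face is asked for more charge than it has (in the paper's accounting, $6\times\frac{5}{4}+1>2\cdot 7-6$). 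Closing this case requires identifying these ``poor'' and ``special'' $7$-faces as configurations that force $4^{+}$-vertices on the adjacent $4$-faces — the paper does this via the nearly-$2$-degenerate reducibility lemma (\autoref{Reducible}, built on \autoref{DP-GREEDY}) and the structure theorem \autoref{NONDP} — and then letting those $4$-faces return charge. Your sketch names neither the reducible configurations nor the rules, so the discharging cannot be verified to close.

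A secondary but real error: you assert that ``standard minimality arguments'' make $G$ $2$-connected, hence that every face is bounded by a cycle. For list coloring this does not follow from minimality — the two blocks at a cut vertex are each $L$-colorable, but nothing forces their colorings to agree at that vertex, and permuting colors is unavailable. The paper accordingly does \emph{not} assume $2$-connectivity and explicitly analyzes faces whose boundary is a closed walk with a cut vertex (e.g., a $7$-face whose boundary is a $3$-cycle plus a $4$-cycle, an $8$-face bounded by two $4$-cycles or two triangles and a cut edge). Dropping these cases from the structural lemmas would leave the face analysis incomplete.
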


\begin{theorem}[Han \cite{Han2009}]\label{Han}
Every planar graph with neither 4-, 5-, 7-cycles nor triangles at distance less than two is 3-choosable. 
\end{theorem}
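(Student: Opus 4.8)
The plan is to argue by contradiction through a minimal counterexample together with the discharging method. Let $G$ be a counterexample on the fewest vertices: a plane graph with no $4$-, $5$-, $7$-cycles and with $\dist^{\triangledown}(G) \geq 2$ that carries a list-assignment $L$ with $\lvert L(v)\rvert \geq 3$ but no $L$-coloring. First I would record the routine reductions. Since a vertex of degree at most two has at most two colored neighbours, any $L$-coloring of $G-v$ extends to $v$, so $\delta(G) \geq 3$. A standard argument on cut vertices lets us assume $G$ is $2$-connected, so every face boundary is a cycle; because $G$ has no $4$-, $5$-, $7$-cycles it then has no $4$-, $5$-, $7$-faces. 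Hence, apart from triangles, the shortest faces are $6$-faces, and the hypothesis $\dist^{\triangledown}(G)\geq 2$ forces any two triangles to be vertex-disjoint and nonadjacent; in particular every edge of a triangle lies on a $6^{+}$-face.

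Next I would assemble the reducible configurations: local patterns — triangles whose vertices all have small degree, short chains of $3$-vertices, and $3$-vertices incident to a triangle — that cannot occur in $G$. To prove reducibility I would delete the offending vertices, $L$-color the remainder by minimality, and extend greedily in a carefully chosen order (equivalently, apply the Alon--Tarsi/kernel method to a suitable orientation) so that each reinserted vertex sees fewer forbidden colors than its list size. The triangle-distance hypothesis is exactly what keeps these neighbourhoods tractable: no configuration contains two interacting triangles, and the absence of $4$-, $5$-, $7$-cycles rules out the short chords that would otherwise block the extension.

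Then comes the discharging. Assign each vertex $v$ the charge $d(v)-4$ and each face $f$ the charge $d(f)-4$; by Euler's formula the total charge is $-8$. Only $3$-vertices and $3$-faces are negative, while $6^{+}$-faces and $5^{+}$-vertices are positive, so I would design rules routing the surplus of the large faces and high-degree vertices to the triangles and $3$-vertices. The crucial count is that, because triangles are scattered, a single $6$-face borders at most two triangles: three triangle-edges on a hexagon would form a matching of its edges, forcing two of the triangle-edges to be consecutive and hence two triangles to sit at distance one, a contradiction. Thus a $6$-face's charge of $+2$ can pay its at most two incident triangles, and larger faces have ample surplus. After redistribution I would check that every vertex and face ends nonnegative, contradicting the total of $-8$.

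The hard part will be the discharging bookkeeping rather than the outline. The tension is that $3$-vertices and $3$-faces are simultaneously negative while the only nearby donors, the $6$-faces, carry a surplus of exactly $+2$, so the rules must be tuned so that a triangle with several incident $3$-vertices is still fully paid; this is precisely where the reducible list has to be rich enough to exclude the unpayable patterns. I expect the delicate cases to be $6$-faces that both border a triangle and carry $3$-vertices, where the no-$7$-cycle condition must be invoked to limit how such faces attach to triangles. Finally, since DP-coloring generalizes list-coloring and all of the reductions above are of the ``extend in a good order'' type that remain valid for an arbitrary matching assignment, the same minimal-counterexample and discharging scheme is what I would use to lift the statement to DP-$3$-colorability, which is the paper's actual target.
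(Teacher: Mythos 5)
Your outline is workable in broad strokes, but it both overshoots and undershoots what is actually needed, and the one concrete count you offer does not close. The paper does not prove this theorem by a list-coloring minimal-counterexample argument at all: it proves the stronger structural statement (\autoref{2D}) that every such planar graph is $2$-degenerate, by assuming $\delta(G)\geq 3$ and discharging with $\mu(v)=d(v)-6$ and $\mu(f)=2d(f)-6$. No reducible configurations, no coloring extensions, and no list or DP machinery appear anywhere; $3$-choosability and DP-$3$-colorability then follow in one line because $k$-degenerate graphs are greedily DP-$(k+1)$-colorable. By contrast, your plan rests on an unspecified list of reducible configurations (``triangles whose vertices all have small degree, short chains of $3$-vertices, \dots'') whose reducibility is never argued; as it happens none are needed, but as written this is a placeholder rather than a proof. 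Your reduction to the $2$-connected case is also not available: for list- and DP-coloring one cannot permute colors at a cut vertex, so minimal counterexamples are not known to be $2$-connected, and the paper instead works directly with face boundaries that are closed walks (this is exactly why \autoref{NOADJ} discusses $7$-faces whose boundary would be a triangle plus a $4$-cycle).

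The concrete gap is in your ``crucial count.'' The relevant structural fact is not that a $6$-face borders at most two triangles, but that a $3$-face cannot be adjacent to any $7^{-}$-face: a triangle sharing an edge with a $6$-cycle produces a $7$-cycle (chordlessness of $8^{-}$-cycles, which follows from the absence of $4$- and $5$-cycles, forces the apex of the triangle off the hexagon), and $7$-cycles are forbidden. Consequently $6$-faces border no triangles and carry no weak $3$-vertices, and every face adjacent to a triangle is an $8^{+}$-face with ample surplus. Without this lemma your arithmetic fails: in your scheme a $6$-face has charge $+2$, yet if it bordered two triangles and were incident with six $3$-vertices it would owe $\tfrac{1}{3}$ to each triangle, $\tfrac{1}{2}$ to each of four weak $3$-vertices and $\tfrac{1}{3}$ to each remaining $3$-vertex, a total of $\tfrac{10}{3}$; your ``at most two triangles'' matching bound does not rescue this. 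So the missing ingredient is precisely this adjacency lemma, and once it is in hand the coloring superstructure of your plan collapses to the paper's short degeneracy argument.
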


In the last section, we give two Bordeaux-type results on DP-3-coloring. The first one improves \autoref{ZHS} and the second one improves \autoref{Han}. 
\begin{restatable}{theorem}{MRESULTb}\label{MRESULTb}
Every planar graph with neither 5-, 6-, 8-cycles nor triangles at distance less than two is DP-3-colorable.
\end{restatable}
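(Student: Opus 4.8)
The plan is to argue by contradiction through a minimal counterexample together with a discharging argument, upgrading the list-coloring strategy behind \autoref{ZHS} to the DP setting. Let $G$, equipped with a cover $H=H_{L,\mathscr{M}}$ having no $\mathscr{M}$-coloring, be chosen with $|V(G)|+|E(G)|$ minimum among all counterexamples; then every proper subgraph of $G$ is DP-$3$-colorable under the restricted cover. A routine argument gives $\delta(G)\ge 3$, since a vertex of degree at most $2$ sees at most two forbidden colors out of the at least three in its list and can be colored last; I may also assume $G$ is $2$-connected, so that every face is bounded by a cycle. The forbidden-cycle hypotheses (no $5$-, $6$-, $8$-cycles) together with $\dist^{\triangledown}\ge 2$ then force strong local separation: gluing a $3$-face to an adjacent $k$-face along an edge yields a $(k+1)$-cycle, so $k\notin\{4,5,7\}$, and combined with the absence of $5$-, $6$-, $8$-cycles and of adjacent triangles this gives $k\ge 9$; similarly a $4$-face can only border a $7^+$-face and never another $4$-face. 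Consequently at most one face around any $3$-vertex is a $4$-face, and a $3$-vertex lies on at most one $3$-face.

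For the discharging, assign $\mu(x)=d(x)-4$ to every vertex and face; by Euler's formula $\sum_x\mu(x)=-8$. The only negatively charged objects are $3$-vertices and $3$-faces (each $-1$); $4$-vertices and $4$-faces are neutral, and all surplus sits on $5^+$-vertices and $7^+$-faces. First I would route charge from the large faces to the deficient objects: each $3$-face collects $\tfrac13$ across each of its three edges from the incident $9^+$-faces and reaches $0$, while each $3$-vertex collects a fixed share from the incident $7^+$-faces, of which it always has at least two (two $9^+$-faces when it lies on a triangle, two $7^+$-faces otherwise, using that it meets at most one $4$-face). The verification then amounts to showing no large face is over-drawn, which follows from the separation lemmas—triangle-edges around a face are pairwise nonadjacent and $4$-faces are isolated—supplemented by a few reducible configurations that cap the number of $3$-vertices a $7$-face may carry and the number of low-degree neighbors a $3$-vertex may have.

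The crux, and the place where genuine work beyond the list-coloring proof is needed, is proving these configurations reducible for DP-coloring. For each one I would delete a small set $S$, extend a minimal $\mathscr{M}$-coloring of $G-S$ over $S$ using residual lists (the colors of each $v\in S$ not blocked by a matching edge to an already-colored neighbor), and color $S$ greedily or along paths and cycles. The basic tools are that a vertex whose residual list exceeds its residual degree is always colorable and that short paths and cycles admit $\mathscr{M}$-colorings for suitable residual sizes; the care lies in that the matchings $\mathscr{M}_{uv}$ must be respected edge by edge.

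The hard part will be exactly this DP-reducibility. In the list-coloring setting many reductions close by invoking color symmetry—e.g.\ that an uncolored path, cycle, or near-tree with given list sizes is colorable—but under DP-coloring the matchings break that symmetry, so each such step must be replaced by an explicit matching-sensitive analysis, and configurations reducible for lists may need strengthening or may fail outright. Pinning down a family of reducible configurations that is simultaneously provable in the DP setting and strong enough to drive the discharging to the contradiction $0\le\sum_x\mu(x)=-8$ is the delicate part; once the structural and reducibility lemmas are in hand, the discharging arithmetic is routine.
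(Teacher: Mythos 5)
Your overall strategy---minimal counterexample, face-adjacency lemmas forced by the forbidden cycles, then discharging---is the same as the paper's, but the proposal stops short of the part that actually constitutes the proof, and the discharging as sketched fails on a concrete configuration. With your charge $\mu(x)=d(x)-4$, a $4$-face incident with four $3$-vertices has nothing to give, so each of its incident $3$-vertices must draw $\frac{1}{2}$ from each of its two $7^{+}$-faces. Now take a $7$-face $f$ (charge $+3$) whose seven incident vertices are all $3$-vertices, six of them lying on $4$-faces adjacent to $f$ and one of them on no $4^{-}$-face: the demand on $f$ is $6\times\frac{1}{2}+\frac{1}{3}>3$. Nothing in your structural lemmas excludes this configuration, and it is not excludable by elementary means: this is precisely the ``poor $7$-face'' of the paper. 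The paper rescues it by proving, via \autoref{NONDP} (a Gallai-tree-type structure theorem for minimal non-DP-$3$-colorable graphs) and \autoref{Reducible} (a nearly-$2$-degenerate reducible path), that each of the three adjacent $4$-faces must carry a $4^{+}$-vertex, and then adding reverse discharging rules sending $\frac{1}{4}$ from such $4$-faces back to poor and ``special'' $7$-faces. You correctly identify that DP-reducibility of such configurations is ``the delicate part,'' but you neither name the configurations nor prove anything about them, so the argument does not close; the contradiction $0\le -8$ is never reached.

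Two secondary points. First, you assume $G$ is $2$-connected so that every face is bounded by a cycle; for DP-coloring this is not the free consequence of minimality it is for ordinary coloring (one cannot permute colors across a cut vertex, and fixing the cut vertex's color leaves a list of size $1$), and the paper avoids the issue by analyzing faces whose boundaries are closed walks with cut vertices directly (see the proof of \autoref{NO-ADJ}). Second, your claim that each $3$-face recovers its deficit by taking $\frac{1}{3}$ across each edge from the adjacent $9^{+}$-faces is fine, but the paper instead uses the normalization $\mu(v)=d(v)-6$, $\mu(f)=2d(f)-6$, under which $3$-faces are already neutral; either normalization works, and this is not where the difficulty lies.
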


\begin{theorem}\label{NO457}
Every planar graph with neither 4-, 5-, 7-cycles nor triangles at distance less than two is DP-3-colorable. 
\end{theorem}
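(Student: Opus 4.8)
The plan is to argue by contradiction with the standard minimal-counterexample-plus-discharging method, adapting to the correspondence setting the list-coloring proof of \autoref{Han}. Suppose the theorem fails, and let $G$ be a planar graph with no $4$-, $5$-, $7$-cycles and with $\dist^{\triangledown}(G)\ge 2$ that admits a cover $H=H_{L,\mathscr{M}}$ with $\vert L(v)\vert\ge 3$ but no $\mathscr{M}$-coloring; choose $(G,H)$ so that $\vert V(G)\vert$ is as small as possible, and assume every list has size exactly $3$. First I would record the easy global facts. We may assume $G$ is $2$-connected (so each face is bounded by a cycle) and $\delta(G)\ge 3$: a vertex $v$ with $d(v)\le 2$ has at most two matched neighbours, so an $\mathscr{M}$-coloring of $G-v$ (which exists by minimality, since $G-v$ inherits all hypotheses) blocks at most two of the three colors in $L_v$ and hence extends to $v$. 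Fixing a plane embedding, the hypotheses force the basic local geometry: every non-triangular face has length $6$ or at least $8$; since $\dist^{\triangledown}(G)\ge 2$, no two triangles share a vertex and no edge joins two triangles, so each edge of a $3$-face lies on a $6^{+}$-face and each vertex is incident with at most one $3$-face.

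The heart of the argument is a list of reducible configurations. I would show, among other things, that no $3$-face carries two vertices of degree $3$, that a $6$-face cannot be incident with too many degree-$3$ vertices while also bordering triangles, and that a $3$-vertex cannot sit in an overly ``poor'' neighbourhood. For each configuration the reducibility proof is an extension argument: delete the offending vertices, invoke minimality to $\mathscr{M}$-color what remains, and color the deleted vertices one at a time in an order in which each uncolored vertex is matched to fewer than $\vert L(\cdot)\vert$ already-colored vertices. This is the first genuine departure from list-coloring: one cannot manufacture an extra available color by identifying two non-adjacent vertices, so each configuration must be truly $3$-degenerate relative to the already-colored part. When a short cycle is involved I would use the fact that the matchings along it compose to a permutation of $[3]$—if that permutation is non-identity the cycle is ``breakable'' and yields an additional degree of freedom—and treat the identity and non-identity cases uniformly.

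With the reducible configurations in hand I would discharge. Assign to each vertex $v$ the charge $d(v)-4$ and to each face $f$ the charge $d(f)-4$; by Euler's formula $\sum_v(d(v)-4)+\sum_f(d(f)-4)=-8$. The only elements of negative charge are $3$-vertices and $3$-faces (each $-1$), while $6^{+}$-faces (at least $+2$) and $5^{+}$-vertices are the reservoirs. I would route charge from $6^{+}$-faces across their incident edges to adjacent $3$-faces and to incident $3$-vertices (with a smaller auxiliary contribution from $5^{+}$-vertices where needed), calibrated so that every $3$-face and every $3$-vertex ends at charge $\ge 0$. The reducible configurations are precisely what bounds from below the final charge of each $6^{+}$-face: the observation that at most $\lfloor d(f)/2\rfloor$ edges of a $d$-face border triangles (two triangles on edges sharing a vertex would violate $\dist^{\triangledown}\ge 2$) controls the outflow of a $d$-face, and the ban on degree-$3$-heavy $6$-faces keeps a $6$-face nonnegative. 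Deriving $\mu^{*}(x)\ge 0$ for every element $x$ then contradicts the total $-8$.

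I expect the main obstacle to be establishing the reducibility of the tighter configurations in the DP setting, not the discharging bookkeeping. Precisely because the identification trick is unavailable, the extension arguments must be carried out entirely through the cover, and the delicate cases are those in which the deleted set induces a short cycle or a near-triangle whose boundary matchings may or may not compose to the identity; handling both possibilities at once, and verifying that enough colors survive at the last vertex to be colored, is where the real work lies. The discharging side should then follow the pattern of the proof of \autoref{Han} almost verbatim, with the analysis of $6$-faces being the most computation-heavy but otherwise routine step.
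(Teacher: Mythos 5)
Your outline has a genuine gap: the entire argument hinges on a list of reducible configurations (``no $3$-face carries two vertices of degree $3$'', ``no degree-$3$-heavy $6$-face bordering triangles'', an unspecified ban on $3$-vertices in ``poor'' neighbourhoods), and you never establish any of them. You yourself flag their DP-reducibility as ``where the real work lies'', which is exactly the problem: for a triangle $uvw$ with $d(u)=d(v)=3$, deleting $u$ and $v$ and recolouring greedily fails at the last vertex (it ends with three coloured neighbours), and the cycle-breaking device you invoke --- that the matchings around a short cycle compose to a non-identity permutation --- is not something you can arrange; the adversary chooses $\mathscr{M}$, and the identity case is precisely the one that behaves like ordinary colouring of $K_3$, which is not $2$-degenerate relative to its boundary. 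So the configurations are not known to be reducible, and without them your discharging is uncalibrated. A smaller but real error: from $\dist^{\triangledown}\geq 2$ the correct bound is that a $d$-face borders at most $\lfloor d/3\rfloor$ triangles, not $\lfloor d/2\rfloor$; the justification you give (``two triangles on edges sharing a vertex'') only uses distance $\geq 1$, and the weaker bound is what the face-charge estimates actually need.

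The paper's route makes all of this machinery unnecessary, and you should compare your plan against it. It proves \autoref{2D}: every planar graph with no $4$-, $5$-, $7$-cycles and $\dist^{\triangledown}\geq 2$ is $2$-degenerate. This is a pure structural statement with \emph{no} reducible configurations at all beyond assuming $\delta(G)\geq 3$ for contradiction: one checks that there are no $4$-, $5$-, $7$-faces, that a $3$-face cannot be adjacent to a $7^{-}$-face, and then discharges with $\mu(v)=d(v)-6$, $\mu(f)=2d(f)-6$ and two rules ($6^{+}$-faces pay $1$ to strong $3$-vertices, $\tfrac12$ to rich $4$-vertices, $\tfrac14$ to $5$-vertices; $8^{+}$-faces pay $\tfrac32$ to weak $3$-vertices and $\tfrac34$ to semi-rich $4$-vertices), using the $\lfloor d/3\rfloor$ bound on adjacent triangles. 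Since every $k$-degenerate graph is DP-$(k+1)$-colourable by the greedy extension through the cover, \autoref{NO457} follows immediately, with no interaction between the colouring and the structure. If you want to salvage your approach, either prove your configurations reducible in the cover (hard, for the reasons above) or notice that the hypothesis is strong enough that degeneracy alone suffices.
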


It is observed that every $k$-degenerate graph is DP-$(k + 1)$-colorable. \autoref{NO457} can be derived from the following \autoref{2D}. 

\begin{restatable}{theorem}{MRESULTc}\label{2D}
Every planar graph with neither 4-, 5-, 7-cycles nor triangles at distance less than two is 2-degenerate. 
\end{restatable}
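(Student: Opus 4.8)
The plan is to prove the stronger statement that every graph $G$ in the class $\mathcal{C}$ of planar graphs with no $4$-, $5$-, $7$-cycles and $\dist^{\triangledown}(G)\ge 2$ has a vertex of degree at most $2$. Since $\mathcal{C}$ is closed under taking subgraphs (deleting vertices or edges destroys no cycle and never decreases distances between triangles), this immediately yields $2$-degeneracy. Suppose not, and let $G\in\mathcal{C}$ be a counterexample with $\delta(G)\ge 3$; after passing to an end-block we may assume $G$ is $2$-connected, so that every face boundary is a cycle and hence every face has length $3$ or at least $6$ (lengths $4,5,7$ are excluded). Fix a plane embedding and assign to each vertex $v$ and each face $f$ the initial charge $\mu(v)=d(v)-4$ and $\mu(f)=d(f)-4$. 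By Euler's formula $\sum_{v}\mu(v)+\sum_{f}\mu(f)=-8$. The only elements of negative charge are $3$-vertices and $3$-faces, each carrying $-1$; every $4$-vertex and every $6$-face is nonnegative, and $\ge 5$-vertices and $\ge 8$-faces carry a comfortable surplus.

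The heart of the argument is a local lemma extracting strength from the forbidden cycles. First, the distance condition gives that triangles are pairwise vertex-disjoint and nonadjacent, so each vertex lies on at most one $3$-face. The crucial point is that every face sharing an edge with a triangle has length at least $8$: if a triangle $xyz$ shares the edge $xy$ with a face $f$ of length $\ell$, then replacing $xy$ by the path $x,z,y$ along $f$ produces an $(\ell+1)$-cycle, which is forbidden when $\ell\in\{3,4,6\}$; together with the absence of $5$- and $7$-faces this forces $\ell\ge 8$ (the case where $z$ lies on $\partial f$ is excluded using $2$-connectivity and the short-cycle restrictions). Three consequences drive the discharging: (i) a $6$-face is adjacent to no triangle and is incident to no triangle vertex; (ii) a degree-$3$ vertex of a triangle has both of its non-triangular incident faces of length $\ge 8$; and (iii) on a face $f$, any two distinct triangle-adjacent edges are separated by at least two edges of $f$ (otherwise two triangles would lie within distance $1$), so $f$ carries at most $\lfloor d(f)/3\rfloor$ triangle-adjacent edges.

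With this in hand I would use the following discharging rules. (R1) Every $\ge 8$-face sends $\tfrac13$ across each incident edge that it shares with a $3$-face. (R2) Every $\ge 6$-face sends $\tfrac13$ to each incident $3$-vertex lying on no triangle. (R3) Every $\ge 8$-face sends $\tfrac12$ to each incident $3$-vertex lying on a triangle. A $3$-face then receives $\tfrac13$ across each of its three edges and ends at $0$; a triangle-free $3$-vertex receives $\tfrac13$ from each of its three ($\ge 6$)-faces and ends at $0$; a triangle $3$-vertex receives $\tfrac12$ from each of its two ($\ge 8$)-faces, by consequence (ii), and ends at $0$. For a face $f$ with $d=d(f)\ge 8$ carrying $k\le\lfloor d/3\rfloor\le d-6$ triangle-adjacent edges, the only $3$-vertices receiving $\tfrac12$ are the at most $2k$ endpoints of those edges, contributing at most $2k\cdot\tfrac12=k$, so the total output of $f$ is at most $\tfrac{k}{3}+k+\tfrac{d-2k}{3}=\tfrac{d}{3}+\tfrac{2k}{3}\le d-4$, leaving it nonnegative; a $6$-face sends at most $6\cdot\tfrac13=2$ by (i) and stays nonnegative; and $4$-vertices and $\ge 5$-vertices are untouched. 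Thus every final charge is nonnegative, contradicting the total $-8$.

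I expect the main obstacle to be the rigorous proof of the structural lemma in its degenerate cases rather than the discharging itself, which balances cleanly, indeed tightly, with every element reaching exactly $0$ in the extremal configurations. Specifically, one must rule out a face meeting a single triangle along two edges and the coincidence $z\in\partial f$ above, and must treat triangle vertices of degree larger than $3$, all of which require a careful local analysis based on the simultaneous absence of $4$-, $5$-, and $7$-cycles together with $2$-connectivity. The other point needing care is the connectivity reduction: justifying that a minimum counterexample may be taken $2$-connected (handling cut vertices via an end-block argument) so that faces are genuine cycles and the face-length restrictions apply.
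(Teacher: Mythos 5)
Your overall strategy is the paper's strategy in a different normalization: a minimum-degree-$3$ counterexample, the observations that there are no $4$-, $5$-, $7$-faces, that a $3$-face is only adjacent to $8^{+}$-faces, that every vertex lies on at most one triangle, and a discharging in which large faces pay the weak $3$-vertices more than the strong ones. (The paper uses $\mu(v)=d(v)-6$, $\mu(f)=2d(f)-6$, and avoids your $2$-connectivity reduction by analysing non-cyclic face boundaries directly, e.g.\ a $7$-face whose boundary is not a cycle would split into a triangle and a $4$-cycle; that difference is cosmetic.) The genuine gap is your consequence (iii) and the bound $k\le\lfloor d(f)/3\rfloor$ on the number $k$ of edges that a face $f$ shares with $3$-faces. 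Your justification (``otherwise two triangles would lie within distance $1$'') only separates edges coming from \emph{distinct} triangles; a face can share two \emph{consecutive} edges $va,vb$ with a single $3$-face $vab$, and those are separated by zero edges of $f$. This is not a vacuous case: for $d(f)=10$ one can have two $3$-faces each contributing two edges (the only extra cycles created have lengths $3$, $8$, $9$, $10$), so $k=4>\lfloor 10/3\rfloor$ and the claimed bound is simply false. Worse, your face-charge computation needs exactly $k\le d(f)-6$, which at the tight case $d(f)=8$ is $k\le 2=\lfloor 8/3\rfloor$, so the verification there rests precisely on the part of the claim you did not prove. The needed inequality $k\le d(f)-6$ is true, but it requires an argument you do not give: if $f$ shares $va$ and $vb$ with the $3$-face $vab$, then $ab$ is a chord of $\partial f$ joining vertices at distance two along $\partial f$ and hence yields a $\bigl(d(f)-1\bigr)$-cycle, which is a forbidden $7$-cycle when $d(f)=8$; for $d(f)=9$ the same device forbids two such double-shares (they would leave a $7$-cycle), and then a count of the form $d(f)\ge k+2m$, where $m$ is the number of adjacent $3$-faces and each contributes at most two edges, closes the remaining cases $d(f)\ge 9$.

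Three smaller points. First, your consequence (i) is half false: a $6$-face cannot share an edge with a triangle, but it can share a vertex with one; nothing breaks because your rules never make a $6$-face pay a triangle vertex more than $\tfrac13$, but the claim should be weakened. Second, ``triangle'' in R2/R3 must mean ``$3$-face'': otherwise a $3$-vertex on a non-facial $3$-cycle would receive $\tfrac12$ from incident $8^{+}$-faces without being an endpoint of any edge shared with a $3$-face, breaking the ``at most $2k$ recipients'' accounting. With ``$3$-face'' the accounting does work, since a $3$-vertex incident with a $3$-face and with $f$ necessarily puts one of its two triangle edges on $\partial f$. Third, the end-block reduction leaves the cut vertex with possibly degree $2$ in the block, so the block need not inherit $\delta\ge 3$; this is repairable (one exceptional vertex of final charge $\ge -2$ still contradicts the total $-8$, or one can drop $2$-connectivity entirely as the paper does), but as written the reduction is not justified.
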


For more results on DP-coloring of planar graphs, we refer the reader to \cite{MR3983123,Lu2019,MR3881665,MR3802151,Li2019a,MR3996735}. For convenience, we collect some results on list-$3$-coloring and DP-$3$-coloring in \autoref{TABLE}. If $uv$ is incident with a $7^{+}$-face and a $4^{-}$-face, then we say $uv$ {\bf controls} the $4^{-}$-face. Similarly, if $uv$ is on a $7^{+}$-cycle and a $4^{-}$-cycle, then we say $uv$ {\bf controls} the $4^{-}$-cycle. A vertex $v$ on a $7^{+}$-face $f$ is {\bf rich} to $f$ if none of the two incident edges on $f$ control a $4^{-}$-face, {\bf semi-rich} if exactly one of the two incident edges on $f$ controls a $4^{-}$-face, and {\bf poor} if they control two $4^{-}$-faces. A $3$-vertex $v$ is {\bf weak} if $v$ is incident with a 3-face, {\bf semi-weak} if $v$ is incident with a 4-face, and {\bf strong} if $v$ is incident with no $4^-$-face. 

For a face $f \in F$, if all the vertices on $f$ in a cyclic order are $v_{1}$, $v_{2}$, $\dots$, $v_{k}$, then we write $f=v_{1}v_{2}\dots v_{k}$, and call $f$ a $\big(d(v_{1}), d(v_{2}), \dots, d(v_{k})\big)$-face. A face is called a {\bf $k$-regular face} if every vertex incident with it is a $k$-vertex. A ($d_{1}$, $d_{2}$, $\dots$, $d_{t}$)-path $v_{1}v_{2}\dots v_{t}$ on a face $g$ is a set of consecutive vertices along the facial walk of $g$ such that $d(v_{i})=d_{i}$ and the vertices are different. The notions of $d^{+}$ (or $d^{-}$) are similarly for $d(v)\geq d$ (or $d(v)\leq d$). 
\section{Preliminary}
In this short section, some preliminary results are given, and these results can be used separately elsewhere. Liu \etal \cite{MR3886261} showed the ``nearly $(k - 1)$-degenerate'' subgraph is reducible for DP-$k$-coloring. 
\begin{lemma}[Liu \etal \cite{MR3886261}]\label{DP-GREEDY}
Let $k \geq 3$, $K$ be a subgraph of $G$ and $G' = G - V(K)$. If the vertices of $K$ can be ordered as $v_{1}, v_{2}, \dots, v_{t}$ such that the following hold: 
\begin{enumerate}[label = (\arabic*)]
\item $\vert V(G') \cap N_{G}(v_{1})\vert < \vert V(G') \cap N_{G}(v_{t})\vert$; 
\item $d_{G}(v_{t}) \leq k$ and $v_{1}v_{t} \in E(G)$; 
\item for each $2 \leq i \leq t - 1$, $v_{i}$ has at most $k - 1$ neighbors in $G - \{v_{i+1}, v_{i+2}, \dots, v_{t}\}$, 
\end{enumerate}
then any DP-$k$-coloring of $G'$ can be extended to a DP-$k$-coloring of $G$. 
\end{lemma}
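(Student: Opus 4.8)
The plan is to view a DP-$k$-coloring as an independent transversal of the cover $H$: the given DP-$k$-coloring of $G'$ is an independent set $\mathcal{I}'$ in $H$ meeting each $L_v$ ($v \in V(G')$) exactly once, and extending it to $G$ amounts to choosing one vertex of $L_v$ for each $v \in V(K)$ while preserving independence. The basic mechanism I would use repeatedly is that, since each $\mathscr{M}_{uv}$ is a matching, every already-coloured neighbour of a vertex $v$ forbids at most one element of $L_v$; hence whenever $v$ has at most $k-1$ already-coloured neighbours and $|L(v)| \geq k$, the vertex $v$ can still be coloured. The whole argument is thus a greedy colouring of $v_1, \dots, v_t$ in a carefully chosen order in which the endpoints $v_1$ and $v_t$ are treated specially and the edge $v_1 v_t$ from condition (2) is used to link them.

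First I would extract the slack hidden in conditions (1) and (2). Write $a = |V(G') \cap N_G(v_1)|$ and $b = |V(G') \cap N_G(v_t)|$. Since $v_1 v_t \in E(G)$ and $d_G(v_t) \leq k$, the neighbours of $v_t$ lying in $G'$ number at most $d_G(v_t) - 1 \leq k-1$, so $b \leq k-1$; combined with condition (1), namely $a < b$, this forces $a \leq k-2$. Consequently, once $G'$ is coloured, $v_1$ has at most $a \leq k-2$ coloured neighbours and therefore at least two admissible colours in $L(v_1)$. Symmetrically, every neighbour of $v_t$ other than $v_1$ lies in $V(G') \cup \{v_2, \dots, v_{t-1}\}$, and there are at most $d_G(v_t) - 1 \leq k-1$ of them, so ignoring $v_1$ the vertex $v_t$ always retains at least one admissible colour. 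For the interior vertices, condition (3) says that each $v_i$ ($2 \leq i \leq t-1$) has at most $k-1$ neighbours in $V(G') \cup \{v_1, \dots, v_{i-1}\}$; hence if I colour $v_1$ first and then $v_2, \dots, v_{t-1}$ in increasing order of index, every interior vertex meets at most $k-1$ coloured neighbours when it is reached and can be coloured by the basic mechanism.

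This leaves the coordination of the two endpoints, which is where I expect the real work to lie. After $v_1$ and all interior vertices are coloured, the only neighbour of $v_t$ that could still constrain it is $v_1$, which is already coloured, so the question is whether the colour forced on $v_1$ destroys $v_t$'s last option. The key observation is that $v_t$ can fail only in the tight case $|L(v_t)| = k$ with $d_G(v_t) = k$, where its $k-1$ neighbours other than $v_1$ forbid $k-1$ distinct colours and a single colour $c_t$ survives; and because $\mathscr{M}_{v_1 v_t}$ is a matching, $c_t$ is matched to at most one colour of $v_1$. Since $v_1$ has at least two admissible colours, I can choose $v_1$'s colour to avoid the $\mathscr{M}_{v_1 v_t}$-partner of $c_t$, thereby freeing $c_t$ for $v_t$.

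The main obstacle is logical rather than numerical: $v_1$ must be coloured early so that condition (3) governs the interior vertices, yet the colour I ultimately want to assign to $v_1$ is pinned down only at the end, when $v_t$ is reached. I would resolve this by colouring the endpoints together against their exterior constraints before touching the interior: choose any colour $c_t$ for $v_t$ admissible relative to its $G'$-neighbours, then pick one of the $\geq 2$ admissible colours of $v_1$ avoiding the $\mathscr{M}_{v_1 v_t}$-partner of $c_t$, and only afterwards colour $v_2, \dots, v_{t-1}$. The delicate point to verify in this ordering is that committing $v_t$ early does not over-constrain an interior vertex adjacent to $v_t$, since such a $v_i$ is not counted against the bound in condition (3); carefully balancing the degree bound at $v_t$, the strict inequality $a < b$, and the matching structure on $v_1 v_t$ in exactly this case is the crux I would have to push through, falling back if necessary on re-running the interior greedy after switching $v_1$ to its second admissible colour.
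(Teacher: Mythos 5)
Your setup is the right one and matches the standard argument: the slack $a\le k-2$ giving $v_1$ at least two admissible colours, the increasing greedy order on $v_2,\dots,v_{t-1}$ justified by condition (3), and $v_t$ coloured last. But the crux --- coordinating $v_1$ and $v_t$ --- is exactly where your proposal breaks, and you half-admit it. Your resolution commits $v_t$ (or a reserved colour $c_t$ for it) \emph{before} the interior greedy. If $v_t$ is genuinely coloured first, then an interior vertex $v_i$ adjacent to $v_t$ may have $k$ coloured neighbours when its turn comes, because condition (3) bounds only its neighbours in $G-\{v_{i+1},\dots,v_t\}$ and does not count $v_t$; the greedy then fails, and nothing in your proposal repairs this. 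If instead $v_t$ stays uncoloured and $c_t$ is merely ``reserved'', the reservation is worthless: the interior neighbours of $v_t$ are coloured greedily, you have no control over their choices, and they may forbid $c_t$. Your fallback --- switch $v_1$ to its other admissible colour and re-run the interior greedy --- is unjustified, because the colour that survives at $v_t$ depends on the interior colouring, which in turn depends on $v_1$'s colour; for all the argument shows, both choices of $v_1$ fail. So the proposal is circular precisely at the point you yourself call ``the crux I would have to push through''.

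The missing idea (this is how Liu \etal prove the lemma in \cite{MR3886261}; the present paper only cites it) is to reserve a \emph{set} of colours at $v_t$, large enough to survive the interior greedy no matter what it does. Keep your notation $a=|V(G')\cap N_G(v_1)|$, let $s=|V(G')\cap N_G(v_t)|$, and let $L'(v)$ denote the colours of $v$ compatible with the given colouring of $G'$. Then $|L'(v_1)|\ge k-a\ge k-s+1$ (this is where the strict inequality $a<s$ of condition (1) enters), $|L'(v_t)|\ge k-s$, and $v_t$ has at most $d_G(v_t)-s-1\le k-s-1$ neighbours among $v_2,\dots,v_{t-1}$. Now a pigeonhole dichotomy: if some $c\in L'(v_1)$ has no $\mathscr{M}_{v_1v_t}$-partner in $L'(v_t)$, colour $v_1$ with $c$, so $v_t$ still has at least $k-s$ compatible colours; otherwise every colour of $L'(v_1)$ has a partner in $L'(v_t)$, and since $\mathscr{M}_{v_1v_t}$ is a matching these partners are distinct, forcing $|L'(v_t)|\ge|L'(v_1)|\ge k-s+1$, so colouring $v_1$ with any colour of $L'(v_1)$ still leaves at least $k-s$ compatible colours at $v_t$. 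Either way $v_t$ retains a buffer of colours exceeding by one the number of its still-uncoloured neighbours; then your interior greedy runs exactly as you describe (valid because $v_t$ is uncoloured), and $v_t$ is coloured last. This removes the circularity: no specific colour is ever promised to $v_t$, only a budget that the interior vertices cannot exhaust.
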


A graph is {\bf minimal non-DP-$k$-colorable} if it is not DP-$k$-colorable but every subgraph with fewer vertices is DP-$k$-colorable. We give more specific reducible ``nearly $2$-degenerate'' configuration for DP-$3$-coloring. 
\begin{lemma}\label{Reducible}
Suppose that $G$ is a minimal non-DP-$3$-colorable graph and it has no adjacent $4^{-}$-cycles. Let $\mathcal{C}$ be an $m$-cycle $v_{1}v_{2}\dots v_{m}$, let $X = \{\,i : d(v_{i}) = 4, \quad 1 \leq i \leq m\,\}$ and $E^{+} = \{\,v_{i}v_{i+1} : i \in X\,\} \cup \{v_{m}v_{1}\}$. If $v_{m}$ is a $3$-vertex and $v_{m}v_{1}$ controls a $3$-cycle $v_{m}v_{1}u$ or a $4$-cycle $v_{m}v_{1}uw$, then $G$ contains no configuration satisfying all the following conditions:
\begin{enumerate}[label = (\roman*)]
\item every edge $e$ in $E^{+}$ controls a $4^{-}$-cycle $C_{e}$; 
\item all the vertices on $\mathcal{C}$ and the other vertices on cycles controlled by $E^{+}$ are distinct; 
\item every vertex on $\mathcal{C}$ is a $4^{-}$-vertex;
\item every vertex on cycles controlled by $E^{+}$ but not on $\mathcal{C}$ is a $3$-vertex; 
\item the vertex $u$ has a neighbor neither on $\mathcal{C}$ nor on the cycles controlled by $E^{+}$.
\end{enumerate}
\end{lemma}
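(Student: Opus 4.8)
The plan is to argue by contradiction through \autoref{DP-GREEDY}. Suppose $G$ is a minimal non-DP-$3$-colorable graph with no adjacent $4^{-}$-cycles that nonetheless contains a configuration as described. Let $K$ be the subgraph of $G$ spanned by all vertices on $\mathcal{C}$ together with all vertices on the cycles controlled by $E^{+}$ (in particular $u$, and $w$ in the $4$-cycle case), and set $G' = G - V(K)$. By minimality $G'$ has an $\mathscr{M}$-coloring, so it suffices to order $V(K)$ so as to meet the three hypotheses of \autoref{DP-GREEDY} with $k = 3$; the resulting extension to $G$ is the contradiction.

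The first key step is the choice of endpoints: I would take $w_{t} = u$ and $w_{1} = v_{1}$. By (v) the vertex $u$ has exactly one neighbor outside $K$, so $\vert V(G') \cap N_{G}(u)\vert = 1$, whereas every neighbor of $v_{1}$ — namely $v_{2}$, $v_{m}$, $u$, and, if $v_{1}$ is a $4$-vertex, the extra ear-vertex on $v_{1}v_{2}$ — lies in $K$, so $\vert V(G') \cap N_{G}(v_{1})\vert = 0$; this is condition (1). Since $d(u) = 3$ by (iv) and $v_{1}u \in E(G)$ (the edge $v_{1}u$ lies on the $u$-ear in both the triangle and the $4$-cycle case), condition (2) holds as well. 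It is essential here to root the order at $v_{1}$ rather than at $v_{m}$.

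For the intermediate order I would traverse $\mathcal{C}$ in the direction $v_{1}, v_{2}, \dots, v_{m}$ and, immediately after each $4$-vertex $v_{i}$, insert the non-cycle vertices of the ear $C_{v_{i}v_{i+1}}$ it controls; the non-cycle vertices of the $u$-ear are placed last, just before $w_{t} = u$. By (ii) and the no-adjacent-$4^{-}$-cycle hypothesis the ears are vertex-disjoint off $\mathcal{C}$ and edge-disjoint, so this insertion rule is well defined. I would then check condition (3) — at most two back-neighbors — type by type: a cycle $3$-vertex lying on no ear sees only its $\mathcal{C}$-predecessor and its single outside neighbor; an ear apex sees only the $4$-vertex it was placed behind and its outside neighbor; a $4$-vertex $v_{i}$ sees its predecessor together with either its outside neighbor or the ear-vertex contributed by the preceding $4$-vertex; and the wrap vertex $v_{m}$ sees exactly $v_{m-1}$ and $v_{1} = w_{1}$, its remaining neighbor being the internal $u$-ear vertex placed later. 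In each case the count is exactly two, in agreement with the tight identity $\vert E(K)\vert + b = 2\vert V(K)\vert - 1$, where $b$ is the number of edges between $K$ and $G'$.

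The hard part will be precisely this last verification: the budget is tight, so the order has no slack and I must confirm that no vertex ever acquires a third back-neighbor. The delicate cases are the $4$-vertices, where (iii) caps the degree at four and the controlled ear must supply the ``second'' internal neighbor so that the outside edge still fits under the bound of two, and the wrap vertex $v_{m}$, whose two cycle-neighbors $v_{m-1}$ and $v_{1}$ are both already placed; here it is crucial that $v_{m}$ is a $3$-vertex incident with the $u$-ear, so that its third neighbor is internal and comes later rather than being an outside edge. Handling runs of consecutive $4$-vertices, and verifying that inserting an ear behind its controlling vertex never pushes the following cycle vertex over the bound, is the main bookkeeping obstacle. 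Once it is cleared, \autoref{DP-GREEDY} extends the coloring of $G'$ to $G$, contradicting the choice of $G$ and proving that no such configuration can occur.
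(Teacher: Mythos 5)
Your proposal is correct and follows essentially the same route as the paper: the paper's proof likewise builds the path from $v_{1}$ to $u$ by detouring through the controlled $4^{-}$-cycles at each edge of $E^{+}$ and appending $v_{m}u$ or $v_{m}wu$, then invokes \autoref{DP-GREEDY} with $k=3$. The only difference is that the paper declares the verification of the three hypotheses ``trivial,'' whereas you carry out the vertex-by-vertex count explicitly; your checks are accurate.
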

\begin{proof}
Suppose to the contrary that there exists such a configuration. For the path $P = v_{1}v_{2}\dots v_{m}$, replace each edge $v_{i}v_{i+1}$ in $E(P) \cap E^{+}$ by the other part of the controlled cycle, and append $v_{m}u$ (when $v_{m}v_{1}u$ is the controlled $3$-cycle) or $v_{m}wu$ (when $v_{m}v_{1}uw$ is the controlled $4$-cycle) at the end. This yields a path starting at $v_{1}$ and ending at $u$. This path trivially corresponds to a sequence of vertices, and the sequence satisfies the condition of \autoref{DP-GREEDY} with $k = 3$, a contradiction. 
\end{proof}

By the definition of minimal non-DP-$k$-colorable, it is easy to obtain the following lemma. 
\begin{theorem}\label{delta}
If $G$ is a minimal non-DP-$k$-colorable graph, then $\delta(G) \geq k$. 
\end{theorem}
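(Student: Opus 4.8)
The plan is to argue by contradiction using exactly the greedy extension that underlies the analogous statement for list-coloring, with the single adaptation that forbidden colors are now read off from the matchings. Suppose the conclusion fails, so $G$ contains a vertex $v$ with $d_{G}(v) \leq k - 1$. To reach a contradiction I would show that $G$ is in fact DP-$k$-colorable: fix an arbitrary list-assignment $L$ with $\vert L(w)\vert \geq k$ for every $w \in V(G)$, an arbitrary matching assignment $\mathscr{M}$, and let $H$ be the associated cover. It then suffices to exhibit an $\mathscr{M}$-coloring of $H$.

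Since $G$ is minimal non-DP-$k$-colorable and $G' = G - v$ has fewer vertices, $G'$ is DP-$k$-colorable. The restrictions of $L$ and $\mathscr{M}$ to $G'$ form a valid list- and matching-assignment, so the corresponding cover $H' = H - L_{v}$ admits an $\mathscr{M}$-coloring $\mathcal{I}'$; that is, an independent set of $H'$ meeting each $L_{w}$ (for $w \neq v$) in exactly one vertex. The only task left is to select one vertex of $L_{v}$ to adjoin to $\mathcal{I}'$ while keeping the resulting set independent in $H$.

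Here is the key observation, which is where the matching structure does the work. In $H$ the vertex set $L_{v}$ is independent, so a chosen vertex $(v, c)$ can conflict only with chosen vertices at the neighbors of $v$. For each neighbor $w$ of $v$, the set $\mathcal{I}' \cap L_{w}$ is a single vertex $(w, c_{w})$, and because $\mathscr{M}_{vw}$ is a matching between $L_{v}$ and $L_{w}$, the vertex $(w, c_{w})$ is incident in $\mathscr{M}_{vw}$ to at most one vertex of $L_{v}$; hence each neighbor forbids at most one color of $L(v)$. As $v$ has at most $k - 1$ neighbors, at most $k - 1$ colors of $L(v)$ are forbidden, and since $\vert L(v)\vert \geq k$ at least one color $c$ remains available. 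Then $\mathcal{I} = \mathcal{I}' \cup \{(v, c)\}$ is an independent set of $H$ meeting every $L_{w}$ in exactly one vertex, i.e.\ an $\mathscr{M}$-coloring of $H$, contradicting that $G$ is not DP-$k$-colorable.

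The argument is entirely routine, and I do not anticipate a genuine obstacle; the only point requiring care — and the reason the statement is phrased for DP-coloring rather than merely quoted from ordinary degeneracy arguments — is that \emph{forbidden} colors must be counted through the matchings $\mathscr{M}_{vw}$, each of which rules out at most one color of $L(v)$ precisely because it is a matching and not an arbitrary relation between $L_{v}$ and $L_{w}$. This is exactly the property that makes the bound $k - 1$ on the number of excluded colors match the bound $k - 1$ on $d_{G}(v)$, leaving the one free color needed to finish.
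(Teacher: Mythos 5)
Your proof is correct and is exactly the routine greedy-extension argument the paper has in mind: the authors state this result without proof (``it is easy to obtain''), and the argument they intend is precisely yours --- delete a vertex $v$ of degree at most $k-1$, extend an $\mathscr{M}$-coloring of the cover of $G-v$ using the fact that each matching $\mathscr{M}_{vw}$ forbids at most one element of $L_v$. No gaps; the key point you highlight (matchings, not arbitrary relations, bound the number of forbidden colors by the degree) is the right one.
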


The following structural result for minimal non-DP-$k$-colorable graphs is a consequences of Theorem in \cite{Lu2019a}. 
\begin{theorem}\label{NONDP}
Let $G$ be a graph and $B$ be a $2$-connected induced subgraph of $G$ with $d_{G}(v) = k$ for all $v \in V(B)$. If $G$ is a minimal non-DP-$k$-colorable graph, then $B$ is a cycle or a complete graph.   
\end{theorem}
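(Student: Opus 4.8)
The plan is to reduce the claim to the characterization of DP-degree-colorable graphs established in \cite{Lu2019a}: a connected graph $F$ fails to be DP-degree-colorable---that is, it admits some list assignment $L$ with $|L(v)| \ge d_{F}(v)$ for all $v$, together with a matching assignment, having no $\mathscr{M}$-coloring---if and only if every block of $F$ is a complete graph or a cycle. Granting this, the task reduces to showing that $B$ is not DP-degree-colorable, for then, since $B$ is $2$-connected and hence its own single block, $B$ must be a complete graph or a cycle.

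First I would fix a bad instance on $G$. Since $G$ is minimal non-DP-$3$-colorable (more generally non-DP-$k$-colorable), there is a list assignment $L$ with $|L(v)| = k$ for every $v \in V(G)$ and a matching assignment $\mathscr{M}$ for which $G$ has no $\mathscr{M}$-coloring (shrink lists to size exactly $k$ if necessary). Put $G' = G - V(B)$. As $B$ is $2$-connected it is nonempty, so $G'$ has fewer vertices than $G$ and is therefore DP-$k$-colorable; hence the restriction of $(L, \mathscr{M})$ to $G'$ admits an $\mathscr{M}$-coloring $\phi'$.

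Next I would transfer $\phi'$ to a list assignment on $B$. For each $v \in V(B)$, let $L'(v)$ consist of those colors $c \in L(v)$ such that $(v,c)$ is not matched in $\mathscr{M}$ to any vertex $(w, \phi'(w))$ with $w \in V(G')$. Because $B$ is an \emph{induced} subgraph, $v$ has exactly $d_{B}(v)$ neighbors inside $B$ and exactly $k - d_{B}(v)$ neighbors in $G'$, and each neighbor in $G'$ deletes at most one color; hence $|L'(v)| \ge k - (k - d_{B}(v)) = d_{B}(v)$. The key point is that any $\mathscr{M}$-coloring of $B$ using the lists $L'$ and the induced matching $\mathscr{M}|_{B}$ would combine with $\phi'$ to yield an $\mathscr{M}$-coloring of all of $G$. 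Since no such coloring of $G$ exists, $B$ together with $(L', \mathscr{M}|_{B})$ has no $\mathscr{M}$-coloring, and because $|L'(v)| \ge d_{B}(v)$ this witnesses that $B$ is not DP-degree-colorable. Applying the characterization then finishes the proof.

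The argument is essentially a reduction, so the only delicate points are the bookkeeping. I would make sure to use that $B$ is induced when equating $|N_{G}(v) \cap V(G')|$ with $k - d_{B}(v)$, and to treat the boundary case $V(B) = V(G)$ separately: there $G'$ is empty, $L' = L$, and $d_{B}(v) = k$ for every $v$, so $B$ with $(L, \mathscr{M})$ has no $\mathscr{M}$-coloring while $|L(v)| = k = d_{B}(v)$, again witnessing non-DP-degree-colorability. I do not expect a serious obstacle beyond correctly invoking the cited degree-colorability theorem; the only conceptual content is the transfer of the coloring of $G'$ into degree-sized lists on $B$.
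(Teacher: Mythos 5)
Your proposal is correct and takes essentially the same approach as the paper: the paper establishes \autoref{NONDP} simply by citing the theorem of \cite{Lu2019a} (the characterization of connected non-DP-degree-colorable graphs as those in which every block is a complete graph or a cycle), which is precisely the result you invoke. Your reduction---coloring $G - V(B)$ by minimality, trimming to residual lists of size at least $d_{B}(v)$ using that $B$ is induced and $d_{G}(v) = k$, and applying the block characterization to the $2$-connected graph $B$---is exactly the standard argument that the paper's citation leaves implicit, and you carry it out correctly, including the boundary case $V(B) = V(G)$.
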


\section{Proof of \autoref{MRESULTa}}
Recall that our first main result is the following. 
\MRESULTa*
\begin{proof}
Let $G$ be a counterexample to the theorem with fewest number of vertices. We may assume that $G$ has been embedded in the plane. Thus, it is a minimal non-DP-$3$-colorable graph with $\delta(G) \geq 3$, and 
\begin{enumerate}[label = (\arabic*)]
\item $G$ is connected;
\item $G$ is a plane graph without adjacent triangles and 5-, 6-, 9-cycles; 
\item $G$ is not DP-3-colorable;
\item any subgraph with fewer vertices is DP-3-colorable. 
\end{enumerate}

A {\bf poor face} is a $10$-face incident with ten $3$-vertices, adjacent to one 4-face and four 3-faces. A {\bf bad face} is a $10$-face incident with ten $3$-vertices and adjacent to five 3-faces. A {\bf bad vertex} is a $3$-vertex on a bad face. A {\bf bad edge} is an edge on the boundary of a bad face. A {\bf special face} is a $(3, 3, 3, 3, 3, 4, 3, 3, 4, 3)$-face adjacent to six 3-faces. A {\bf semi-special face} is a $(3, 3, 3, 3, 3, 4, 3, 3, 4, 3)$-face adjacent to five 3-faces and one $4$-face as depicted in \autoref{fig:subfig:d-}. An illustration of these faces is in \autoref{TENFACE}. 
\begin{figure}%
\centering
\subcaptionbox{poor face \label{fig:subfig:a-}}[0.2\linewidth]
{
\begin{tikzpicture}
\coordinate (A) at (108:1);
\coordinate (B) at (144:1);
\coordinate (C) at (180:1);
\coordinate (D) at (216:1);
\coordinate (E) at (252:1);
\coordinate (F) at (288:1);
\coordinate (G) at (324:1);
\coordinate (H) at (0:1);
\coordinate (I) at (36:1);
\coordinate (J) at (72:1);

\filldraw[fill=gray] (A)--(B)--(C)--(D)--(E)--(F)--(G)--(H)--(I)--(J)--cycle;
\draw (B)--(162:1.2)--(C);
\draw (D)--(234:1.2)--(E);
\draw (F)--(306:1.2)--(G);
\draw (H)--(18:1.2)--(I);
\draw (J)--(80:1.3)--(100:1.3)--(A);
\fill
(A) circle (2pt)
(B) circle (2pt)
(C) circle (2pt)
(D) circle (2pt)
(E) circle (2pt)
(F) circle (2pt)
(G) circle (2pt)
(H) circle (2pt)
(I) circle (2pt)
(J) circle (2pt)
(162:1.2) circle (2pt)
(234:1.2) circle (2pt)
(306:1.2) circle (2pt)
(18:1.2) circle (2pt)
(80:1.3) circle (2pt)
(100:1.3) circle (2pt);
\end{tikzpicture}}
\subcaptionbox{bad face \label{fig:subfig:b-}}[0.2\linewidth]
{
\begin{tikzpicture}
\coordinate (A) at (108:1);
\coordinate (B) at (144:1);
\coordinate (C) at (180:1);
\coordinate (D) at (216:1);
\coordinate (E) at (252:1);
\coordinate (F) at (288:1);
\coordinate (G) at (324:1);
\coordinate (H) at (0:1);
\coordinate (I) at (36:1);
\coordinate (J) at (72:1);

\filldraw[fill=gray] (A)--(B)--(C)--(D)--(E)--(F)--(G)--(H)--(I)--(J)--cycle;
\draw (B)--(162:1.2)--(C);
\draw (D)--(234:1.2)--(E);
\draw (F)--(306:1.2)--(G);
\draw (H)--(18:1.2)--(I);
\draw (J)--(90:1.2)--(A);
\fill
(A) circle (2pt)
(B) circle (2pt)
(C) circle (2pt)
(D) circle (2pt)
(E) circle (2pt)
(F) circle (2pt)
(G) circle (2pt)
(H) circle (2pt)
(I) circle (2pt)
(J) circle (2pt)
(162:1.2) circle (2pt)
(234:1.2) circle (2pt)
(306:1.2) circle (2pt)
(18:1.2) circle (2pt)
(90:1.2) circle (2pt);
\end{tikzpicture}}
\subcaptionbox{special face \label{fig:subfig:c-}}[0.2\linewidth]
{
\begin{tikzpicture}
\coordinate (A) at (108:1);
\coordinate (B) at (144:1);
\coordinate (C) at (180:1);
\coordinate (D) at (216:1);
\coordinate (E) at (252:1);
\coordinate (F) at (288:1);
\coordinate (G) at (324:1);
\coordinate (H) at (0:1);
\coordinate (I) at (36:1);
\coordinate (J) at (72:1);

\filldraw[fill=gray] (A)--(B)--(C)--(D)--(E)--(F)--(G)--(H)--(I)--(J)--cycle;
\draw (A)--(126:1.2)--(B)--(162:1.2)--(C);
\draw (D)--(234:1.2)--(E);
\draw (F)--(306:1.2)--(G);
\draw (H)--(18:1.2)--(I)--(54:1.2)--(J);
\fill
(A) circle (2pt)
(B) circle (2pt)
(C) circle (2pt)
(D) circle (2pt)
(E) circle (2pt)
(F) circle (2pt)
(G) circle (2pt)
(H) circle (2pt)
(I) circle (2pt)
(J) circle (2pt)
(126:1.2) circle (2pt)
(162:1.2) circle (2pt)
(234:1.2) circle (2pt)
(306:1.2) circle (2pt)
(18:1.2) circle (2pt)
(54:1.2) circle (2pt)
;
\end{tikzpicture}}
\subcaptionbox{semi-special face \label{fig:subfig:d-}}[0.2\linewidth]
{
\begin{tikzpicture}
\coordinate (A) at (108:1);
\coordinate (B) at (144:1);
\coordinate (C) at (180:1);
\coordinate (D) at (216:1);
\coordinate (E) at (252:1);
\coordinate (F) at (288:1);
\coordinate (G) at (324:1);
\coordinate (H) at (0:1);
\coordinate (I) at (36:1);
\coordinate (J) at (72:1);

\filldraw[fill=gray] (A)--(B)--(C)--(D)--(E)--(F)--(G)--(H)--(I)--(J)--cycle;
\draw (A)--(126:1.2)--(B)--(152:1.3)--(172:1.3)--(C);
\draw (D)--(234:1.2)--(E);
\draw (F)--(306:1.2)--(G);
\draw (H)--(18:1.2)--(I)--(54:1.2)--(J);
\fill
(A) circle (2pt)
(B) circle (2pt)
(C) circle (2pt)
(D) circle (2pt)
(E) circle (2pt)
(F) circle (2pt)
(G) circle (2pt)
(H) circle (2pt)
(I) circle (2pt)
(J) circle (2pt)
(126:1.2) circle (2pt)
(152:1.3) circle (2pt)
(172:1.3) circle (2pt)
(234:1.2) circle (2pt)
(306:1.2) circle (2pt)
(18:1.2) circle (2pt)
(54:1.2) circle (2pt)
;
\end{tikzpicture}}
\caption{Some $10$-faces in \autoref{MRESULTa}.}
\label{TENFACE}
\end{figure}

\begin{figure}%
\centering
\subcaptionbox{A $7$-face adjacent to a $3$-face \label{fig:subfig:71}}[0.4\linewidth]
{
\begin{tikzpicture}
\coordinate (A) at (30:1.1);
\coordinate (B) at (80:1.1);
\coordinate (C) at (130:1.1);
\coordinate (D) at (180:1.1);
\coordinate (E) at (230:1.1);
\coordinate (F) at (280:1.1);
\coordinate (G) at (330:1.1);
\coordinate (H) at (0:1.6);
\filldraw[fill=gray] (A)--(B)--(C)--(D)--(E)--(F)--(G)--cycle;
\filldraw[fill=green] (A)--(H)--(G)--cycle;
\fill
(A) circle (2pt)
(B) circle (2pt)
(C) circle (2pt)
(D) circle (2pt)
(E) circle (2pt)
(F) circle (2pt)
(G) circle (2pt)
(H) circle (2pt);
\end{tikzpicture}}
\subcaptionbox{A $7$-face adjacent to a $4$-face \label{fig:subfig:72}}[0.4\linewidth]
{
\begin{tikzpicture}
\coordinate (O) at (0, 0);
\coordinate (A) at (45:3);
\coordinate (B) at (135:3);
\coordinate (A') at (60:1);
\coordinate (B') at (120:1);
\coordinate (C) at (90:1.5);
\coordinate (C') at (90:1);
\filldraw[even odd rule, fill=gray] (O)--(A)--(B)--(O)--(A')--(C)--(B')--cycle;
\filldraw[fill=green] (O)--(A')--(C')--(B')--cycle;
\fill
(O) circle (2pt)
(A) circle (2pt)
(B) circle (2pt)
(A') circle (2pt)
(B') circle (2pt)
(C) circle (2pt)
(C') circle (2pt);
\draw (C')--(90:1.2);
\end{tikzpicture}}
\caption{A $7$-face adjacent to a $4^{-}$-face.}
\label{7-4}
\end{figure}
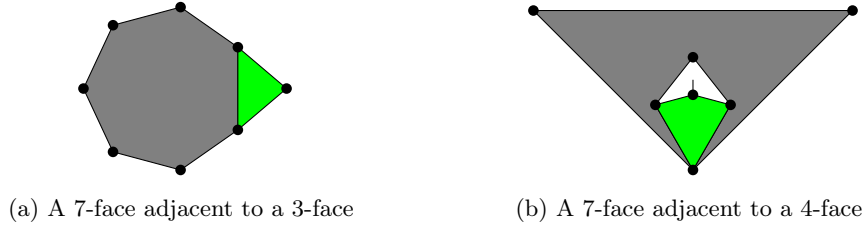

By \autoref{NONDP}, we can easy obtain the following structural result. 
\begin{lemma}\label{B}
Let $f$ be a 10-face bounded by a cycle in $G$. If $f$ is incident with ten $3$-vertices and it controls a $4^{-}$-face, then the controlled $4^{-}$-face is incident with at least one $4^{+}$-vertex. 
\end{lemma}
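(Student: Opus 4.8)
The plan is to argue by contradiction using the structural dichotomy of \autoref{NONDP} with $k = 3$. Recall that in the setting of \autoref{MRESULTa} we have $\delta(G) \ge 3$, so every vertex of $G$ is a $3^{+}$-vertex; consequently, saying that the controlled $4^{-}$-face has \emph{no} $4^{+}$-vertex is the same as saying that every vertex incident with it is exactly a $3$-vertex. So I would suppose for contradiction that the controlled $4^{-}$-face $f'$ is incident only with $3$-vertices. Let $C = v_{1}v_{2}\cdots v_{10}$ be the boundary cycle of $f$; by hypothesis all ten $v_{i}$ are $3$-vertices, and since $f$ is a $10$-face its edges are exactly the edges of $C$, so the controlling edge lies on $C$, say $v_{1}v_{2}$, and this edge also lies on $f'$. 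I would then set $S$ to be $V(C)$ together with the one or two vertices of $f'$ not already on $C$ (the apex $w$ if $f'$ is a $3$-face $v_{1}v_{2}w$, the two vertices $a,b$ if $f'$ is a $4$-face $v_{1}v_{2}ab$), and put $B = G[S]$.

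The next step is to check that $B$ meets the hypotheses of \autoref{NONDP}. Every vertex of $S$ is a $3$-vertex of $G$: the $v_{i}$ by the hypothesis on $f$, and the extra vertices of $f'$ by the contradiction assumption. Moreover $B$ is $2$-connected: it contains the spanning $10$-cycle $C$, and any extra vertices of $f'$ attach to $C$ as an ear joining $v_{1}$ and $v_{2}$ (the apex $w$ adjacent to the consecutive pair $v_{1},v_{2}$, or the path $v_{1}\,b\,a\,v_{2}$ in the $4$-face case), and adding such an ear to a cycle preserves $2$-connectivity. Since $B$ is a $2$-connected induced subgraph all of whose vertices have degree $3$ in $G$, \autoref{NONDP} forces $B$ to be either a cycle or a complete graph.

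Finally I would rule out both outcomes to reach the contradiction. Since $B$ has at least $11$ vertices, it is not complete: a complete graph on $n \ge 11$ vertices has minimum degree $n - 1 \ge 10$, whereas $B$ is induced and every vertex has degree $3$ in $G$, so $d_{B}(v) \le 3$. Nor is $B$ a cycle: the endpoint $v_{1}$ of the controlling edge is adjacent in $B$ to $v_{2}$, to $v_{10}$, and to the extra vertex of $f'$ incident with it, so $d_{B}(v_{1}) \ge 3 > 2$, which no cycle allows. This contradicts \autoref{NONDP}, so $f'$ must after all be incident with a $4^{+}$-vertex. I expect the only delicate point to be the $2$-connectivity verification together with the degenerate possibility that an extra vertex of $f'$ coincides with some $v_{i}$; in that case $S = V(C)$ and $B = G[V(C)]$ still contains the spanning cycle $C$ (hence is $2$-connected) but now carries a chord, so it is again neither a cycle nor a complete graph and the same contradiction applies. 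The core of the argument — apply \autoref{NONDP} and eliminate both alternatives — is immediate once $B$ is set up correctly.
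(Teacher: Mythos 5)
Your argument is correct and is exactly the route the paper intends: the paper states \autoref{B} as an immediate consequence of \autoref{NONDP}, and your proof simply fills in the details of that deduction (form the induced subgraph on the boundary cycle plus the extra vertices of the controlled face, verify $2$-connectivity and that all degrees equal $3$, then observe it is neither a cycle nor a complete graph). The handling of the degenerate case where a vertex of the controlled face lies on the $10$-cycle is a sensible extra precaution and does not change the argument.
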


By \autoref{B} and the definitions of poor faces and bad faces, we have the following consequences. 
\begin{lemma}\label{FORBIDDEN}
\mbox{}
\begin{enumerate}[label = (\roman*)]
\item There are no adjacent poor faces. 
\item There are no adjacent bad faces. 
\item There are no poor faces adjacent to bad faces. 
\end{enumerate}
\end{lemma}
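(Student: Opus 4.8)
The plan is to read off from \autoref{B} the local structure forced around a poor or a bad face, and then to show that if two such faces shared an edge, some controlled triangle would be forced to have a $3$-vertex as its apex.

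First I would pin down the combinatorial skeleton of a poor/bad face $f$. Every vertex incident with $f$ is a $3$-vertex, and no vertex can be an endpoint of two of the controlled $4^{-}$-faces: two controlled faces meeting at a common boundary vertex $v$ would make $d(v) \geq 4$ (or, if the two apices coincided, would create adjacent triangles). Hence the five controlled faces lie on five pairwise vertex-disjoint edges of the bounding $10$-cycle, that is, on a perfect matching $M$ of that cycle. In particular every boundary vertex is matched, and when its $M$-edge carries a triangle its off-cycle (third) edge runs to the apex of that triangle; by \autoref{B} this apex is a $4^{+}$-vertex, and each controlled $4$-face likewise carries a $4^{+}$-vertex off the $10$-cycle.

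The core step is the following. Suppose two such faces $f_{1}$ and $f_{2}$ share an edge $e = xy$. As $f_{1}$ and $f_{2}$ are both $10$-faces, $e$ is incident with no $4^{-}$-face, so $e$ lies in neither matching $M_{1}$ nor $M_{2}$; consequently $x$ is matched in $M_{1}$ by its other $f_{1}$-edge and in $M_{2}$ by its other $f_{2}$-edge, and these two edges together with $e$ already exhaust the three edges at the $3$-vertex $x$. Now if the $M_{1}$-edge at $x$ is a triangle edge, then the apex $w$ of that triangle is adjacent to $x$, so $w$ must be one of the two remaining neighbours of $x$ --- namely $y$, or the far endpoint of its $M_{2}$-edge --- each of which is a $3$-vertex incident with $f_{2}$. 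This contradicts \autoref{B}. The identical contradiction is obtained from the $M_{2}$-side, or with $y$ playing the role of $x$.

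It remains to feed the three cases into this mechanism. In (ii) both faces are bad, so $M_{1}$ consists entirely of triangle edges and the $M_{1}$-edge at $x$ is a triangle edge outright; in (iii) the bad face among $f_{1}, f_{2}$ supplies a triangle matched to $x$, the forced apex being a $3$-vertex on the other (poor or bad) face. Both close immediately. Case (i), where both faces are poor, is the step I expect to be the real obstacle: a poor face owns a single controlled $4$-face, so the $M_{1}$-edge at $x$ could be that $4$-face edge and dodge the triangle argument. The remedy is to play off both ends of $e$ at once: on $f_{1}$ the matched edges at $x$ and at $y$ are distinct, and since $f_{1}$ has only one $4$-face edge, at least one of these two is a triangle edge, and that endpoint triggers the contradiction. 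A small hygiene point to check at the outset is that poor and bad faces really are bounded by cycles, so that \autoref{B} and the matching description apply verbatim.
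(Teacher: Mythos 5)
Your argument is correct and follows exactly the route the paper intends: the paper dismisses this lemma in one line as a consequence of \autoref{B} and the definitions, and your proposal simply supplies the details of that deduction (the controlled $4^{-}$-faces form a perfect matching on the bounding $10$-cycle, so a shared edge forces the $4^{+}$-apex guaranteed by \autoref{B} to land on a vertex of the other all-$3$-vertex face). Your handling of the poor--poor case by using both endpoints of the shared edge, and your hygiene remark that these faces are bounded by cycles, are exactly the points the paper glosses over.
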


The following structural results will be frequently used. 
\begin{lemma}\label{LS}\mbox{}
\begin{enumerate}[label=(\alph*)]
\item\label{a} Every $7^{-}$-cycle is chordless. 
\item\label{b} Every $3$-cycle is not adjacent to $6^{-}$-cycle. 
\item\label{c} Every $7$-face is adjacent to at most one $4^{-}$-face; the possible situations see \autoref{7-4}. Consequently, there are no bad faces adjacent to $7$-faces. 
\item\label{d} No $8$-face is adjacent to a $3$-face; no $9$-face is adjacent to a $3$-face. 
\item\label{e} There are no adjacent $6^{-}$-faces; thus every $3$-vertex is adjacent to at most one $4^{-}$-face. 
\end{enumerate}
\end{lemma}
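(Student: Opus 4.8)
The plan is to derive all five parts from the two standing hypotheses on $G$ --- no adjacent triangles and no $5$-, $6$-, $9$-cycles --- together with planarity, via one recurring device. Whenever two cycles or two facial walks share one or more edges, I delete the shared edge(s) and glue the complementary arcs into a new closed walk, then read off its length or decompose it into cycles. Since the available cycle lengths below $10$ are only $3$, $4$, $7$, $8$, every walk produced this way is either a $5$-, $6$-, or $9$-cycle, or it exhibits two triangles sharing an edge; all of these are forbidden. The only bookkeeping I need is that replacing one boundary edge by the complementary path of an adjacent $3$-face lengthens a cycle by $1$, and by that of an adjacent $4$-face by $2$.

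Part (a) is immediate: a chord of a $4$-cycle splits it into two triangles sharing the chord, and a chord of a $7$-cycle splits it as $3+6$ or $4+5$, each containing a forbidden $5$- or $6$-cycle, while $3$-cycles are chordless and $5$-, $6$-cycles do not occur. Part (b) is the same computation: a $6^{-}$-cycle sharing an edge with a triangle is either another triangle (adjacent triangles) or a $4$-cycle, and a triangle glued to a $4$-cycle along a single edge gives a $5$-cycle, or, if the triangle's apex coincides with a vertex of the $4$-cycle, a pair of adjacent triangles. For (e) I enumerate the possible $6^{-}$-faces: $3$-faces, $4$-faces, and --- since $6$-cycles are excluded --- degenerate $6$-faces, which must decompose as two triangles sharing a cut vertex; there are no $5$-faces. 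Checking each adjacent pair, two $3$-faces and every pair involving a $6$-face reduce to adjacent triangles, a $3$-face with a $4$-face is excluded by (b), and two $4$-faces yield a $6$-cycle by the symmetric-difference rule. The stated corollary that a $3$-vertex meets at most one $4^{-}$-face then follows, since the three faces around a degree-$3$ vertex pairwise share an edge.

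The real work is in (c) and (d), and the obstacle is that $7$-, $8$- and $9$-faces need not be bounded by cycles. When a $7$-face is bounded by a genuine (chordless, by (a)) $7$-cycle, one adjacent $3$-face turns it into an allowed $8$-cycle, whereas one adjacent $4$-face or a second adjacent $3$-face turns it into a forbidden $9$-cycle; hence it meets at most one $4^{-}$-face, necessarily a $3$-face. Adjacencies along a single shared edge are thus killed by the length count, and the surviving configurations --- those recorded in \autoref{7-4} --- involve a cut vertex, where the $7$-face splits into a triangle and a quadrilateral and may share two edges with a neighbouring face; I treat these by applying (b) and the same symmetric-difference rule to the triangle- and quadrilateral-parts, which again pins the count at one. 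The same device gives (d): a $3$-face on an $8$-face forces a $9$-cycle, and a $3$-face on a $9$-face is cleaner still, because a length-$9$ facial walk is either a forbidden $9$-cycle or --- the only partition of $9$ into available cycle lengths --- three triangles sharing cut vertices, so the adjacent $3$-face shares an edge with one of them and we get adjacent triangles. I expect the tedium to concentrate in the degenerate-boundary cases, in particular chords of $8$-cycles (which (a) does not control): there I plan to locate the forced chord-cycle and verify, split position by split position, that it is a $5$- or $6$-cycle or creates adjacent triangles. Finally, the consequence in (c) --- no bad face is adjacent to a $7$-face --- is structural: a triangle-free edge of a bad face has two degree-$3$ endpoints, each on one of the bad face's five $3$-faces, and the third edge at each endpoint is shared between that $3$-face and the adjacent $7$-face, so the $7$-face would be adjacent to two $3$-faces, contradicting the first half of (c).
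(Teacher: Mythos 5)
Your proposal follows the paper's proof essentially verbatim: the same case split between cycle boundaries and degenerate (cut-vertex) boundaries, the same arc-replacement length counts, and the same use of parts (a) and (b) to dispatch the degenerate cases and the bad-face consequence. The one sub-case you elide --- two $3$-faces meeting a $7$-cycle at a common apex, where the length-$9$ closed walk is not a cycle and must instead be decomposed into a $5$-cycle or a pair of adjacent triangles --- is covered by the ``decompose it into cycles'' clause of your preamble and is treated explicitly in the paper, so this is a presentational omission rather than a gap.
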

\begin{proof}
\ref{a} If a $4$-cycle has a chord, then there are two adjacent triangles. Note that $5$-cycles and $6$-cycles are excluded in $G$. If a $7$-cycle has a chord, then there is a $5$- or $6$-cycle, a contradiction. 

\ref{b} Note that $5$-cycles and $6$-cycles are excluded in $G$. If a $3$-cycle is adjacent to a $3$- or $4$-cycle, then it contradicts \autoref{LS}\ref{a}. 

\ref{c} Let $f$ be a 7-face and $C$ be its boundary. (i) Suppose that $C$ is a cycle. If $w_{1}w_{2}w_{3}w_{4}$ is on the boundary and $w_{2}w_{3}$ is incident with a $4$-face $u_{1}w_{2}w_{3}u_{4}$, then none of $u_{1}$ and $u_{4}$ is on $C$ because $C$ is chordless and $\delta(G) \geq 3$, but $C$ and $u_{1}w_{2}w_{3}u_{4}$ form a $9$-cycle, a contradiction. Suppose that $f$ is adjacent to two $3$-faces $uvw$ and $u'v'w'$ with $uv, u'v'$ on $C$. If $w = w'$, then there are two adjacent triangles or a $5$-cycle, a contradiction; and if $w \neq w'$, then there is a $9$-cycle, a contradiction. (ii) Suppose that $C$ is not a cycle, and thus it consists of a $3$-cycle and a $4$-cycle. Hence, $f$ cannot be adjacent to any $3$-face by \autoref{LS}\ref{b}. If $f$ is adjacent to a $4$-face, then it can only be shown in \autoref{fig:subfig:72}. Therefore, $f$ is adjacent to at most one $4^{-}$-face. 

\ref{d} If an $8$-face is bounded by a cycle, then it cannot be adjacent to a $3$-face, otherwise they form a $9$-cycle or a $8$-cycle with two chords, a contradiction. Suppose that the boundary of an $8$-face is not a cycle but it is adjacent to a $3$-face. By \autoref{LS}\ref{b}, the boundary of the $8$-face must contain a $7^{+}$-cycle, but this is impossible. 

Since there is no $9$-cycle, the boundary of a $9$-face is not a cycle. Suppose the boundary of a $9$-face is adjacent to a $3$-face. By \autoref{LS}\ref{b}, the boundary of the $9$-face must contain a $7^{+}$-cycle, but this is impossible. 

\ref{e} Since there is no $6$-cycle, the boundary of a $6$-face consists of two triangles. It is easy to check that there are no adjacent $6^{-}$-faces. 
\end{proof}

\begin{lemma}\label{4-poor}
Each $(3, 3, 3^{+}, 4^{+})$-face $f$ is adjacent to at most one poor face.
\end{lemma}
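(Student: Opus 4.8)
The plan is to argue by contradiction, leaning on two facts: every vertex of a poor face is a $3$-vertex, and by \autoref{FORBIDDEN}(i) no two poor faces are adjacent. Write $f = x_1 x_2 x_3 x_4$ with $d(x_1) = d(x_2) = 3$, $d(x_3) \geq 3$, and $d(x_4) \geq 4$. First I would determine which edges of $f$ can border a poor face at all. Because a poor face is incident only with $3$-vertices, any edge shared with one must have both endpoints of degree $3$; since $d(x_4) \geq 4$, this rules out $x_3 x_4$ and $x_4 x_1$ at once, leaving $x_1 x_2$ and---only when $d(x_3) = 3$---the edge $x_2 x_3$ as the sole candidates.

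Next I would suppose for contradiction that $f$ is adjacent to two poor faces. As each edge of $f$ borders at most one face besides $f$, the two poor faces must sit across two distinct edges, which then have to be exactly $x_1 x_2$ and $x_2 x_3$ (forcing $d(x_3) = 3$). Call them $P_1$ (across $x_1 x_2$) and $P_2$ (across $x_2 x_3$), and focus on their common $3$-vertex $x_2$. Since $\delta(G) \geq 3$ makes $x_2$ a vertex of degree exactly $3$, it has incident edges $x_2 x_1$, $x_2 x_3$, and a third edge $x_2 y$, and exactly three incident faces: $f$ occupies the corner between $x_2 x_1$ and $x_2 x_3$, the face $P_1$ the corner between $x_2 x_1$ and $x_2 y$, and $P_2$ the corner between $x_2 x_3$ and $x_2 y$. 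Thus both $P_1$ and $P_2$ contain the edge $x_2 y$, so they are adjacent, contradicting \autoref{FORBIDDEN}(i).

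The step needing the most care is the corner bookkeeping at $x_2$, specifically checking that $P_1$ and $P_2$ are genuinely distinct faces that share $x_2 y$. Distinctness holds because $P_1$ and $P_2$ lie on the sides of $x_1 x_2$ and $x_2 x_3$ opposite to $f$; were they the same face, it would meet $x_2$ in two separate corners and hence traverse $x_2 y$ twice, which is impossible for a $10$-face bounded by a cycle of ten distinct $3$-vertices. The third neighbour $y$ exists and differs from $x_1, x_3$ by $\delta(G) \geq 3$, and $y \neq x_4$ by \autoref{LS}\ref{a} (otherwise $x_2 x_4$ would chord the $4$-cycle $f$); but beyond confirming that $x_2 y$ is a genuine third edge, these refinements are not otherwise needed. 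I do not anticipate a real obstacle here---the only pitfall is the degenerate possibility $P_1 = P_2$, which simply fails to furnish two poor faces and so never threatens the conclusion.
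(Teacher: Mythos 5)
Your proposal is correct and follows essentially the same route as the paper: restrict attention to $(3,3)$-edges, note that two poor faces would have to lie across $x_1x_2$ and $x_2x_3$ with $d(x_3)=3$, and then use the fact that the degree-$3$ vertex $x_2$ forces those two faces to share its third incident edge, contradicting \autoref{FORBIDDEN}. Your extra care about distinctness of $P_1$ and $P_2$ and the identity of $y$ is a harmless elaboration of what the paper leaves implicit.
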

\begin{proof}
Since every poor face is incident with ten $3$-vertices, $f$ can only be adjacent to poor faces via $(3, 3)$-edges. Let $f=v_{1}v_{2}v_{3}v_{4}$ with $d(v_{1})=d(v_{2})=3$, $d(v_{3})\geq3$ and $d(v_{4})\geq4$. If $d(v_{3})\geq4$, then $f$ is incident with exactly one $(3, 3)$-edge, and then it is adjacent to at most one poor face. Suppose that $d(v_{3})=3$ and $f$ is adjacent to two poor faces $f_{1}$ and $f_{2}$ via $v_{1}v_{2}$ and $v_{2}v_{3}$. Since $v_{2}$ is a $3$-vertex, the poor face $f_{1}$ is adjacent to the poor face $f_{2}$, but this contradicts \autoref{FORBIDDEN}. 
\end{proof}

\begin{lemma}\label{BAD-SPECIAL}
Each bad face is adjacent to at most two special faces.
\end{lemma}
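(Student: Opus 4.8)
The plan is to recast ``adjacency'' in terms of the five free edges of the bad face and then bound how many can carry a special face. Write the bad face as $f=v_1v_2\cdots v_{10}$, with its five incident $3$-faces sitting on the alternating edges $v_1v_2,v_3v_4,\dots,v_9v_{10}$; let $w_i$ denote the apex of the triangle on $v_{2i-1}v_{2i}$, and call the remaining edges $\phi_1=v_2v_3,\dots,\phi_5=v_{10}v_1$ the \emph{free edges}. Since each of the five triangle edges already borders a $3$-face, a special face $g$ adjacent to $f$ must meet $f$ along some free edge, say $\phi_i=v_{2i}v_{2i+1}$. I would first record, applying \autoref{B} to $f$, that every $w_i$ is a $4^{+}$-vertex: the edge $v_{2i-1}v_{2i}$ controls the incident triangle, and the other two vertices $v_{2i-1},v_{2i}$ of that triangle lie on $f$ and are $3$-vertices, so the forced $4^{+}$-vertex can only be $w_i$.

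The key local step analyses the degree-$3$ vertex $v_{2i}$. The three faces meeting at $v_{2i}$ are $f$, the triangle $T_i$, and $g$ (the face across $\phi_i$ from $f$); hence on $\partial g$ the vertex $v_{2i}$ is adjacent to $v_{2i+1}$ (along $\phi_i$) and to $w_i$ (along the triangle edge $v_{2i}w_i$ shared by $g$ and $T_i$). Thus $w_i$ is a boundary vertex of $g$ neighbouring $v_{2i}$, and being a $4^{+}$-vertex on the boundary of a face whose degree sequence is $(3,3,3,3,3,4,3,3,4,3)$ it must be one of the two $4$-vertices of $g$. Running the same argument at $v_{2i+1}$ forces $w_{i+1}$ to be the other $4$-vertex, so $w_i,v_{2i},v_{2i+1},w_{i+1}$ is a sub-path of $\partial g$ and $\phi_i$ is precisely the unique free edge of the special face whose two endpoints are each adjacent to a $4$-vertex of $g$ (the free edge on the short side in \autoref{fig:subfig:c-}). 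Consequently every special neighbour of $f$ is labelled by a free edge $\phi_i$ and ``uses up'' the consecutive apex pair $\{w_i,w_{i+1}\}$; two special neighbours carried by consecutive free edges $\phi_i,\phi_{i+1}$ must therefore share the apex $w_{i+1}$ and the triangle $T_{i+1}$ incident to it.

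With this dictionary the remaining task — and the main obstacle — is to exclude three special neighbours. I would argue by contradiction: among any three chosen free edges in the cyclic family $\phi_1,\dots,\phi_5$ at least two are consecutive, so some apex $w$ is shared, and there the two special faces occupy the two faces at $w$ separated by $T$ and a second triangle common to both. Each special face then contributes a long side of length seven bearing four further $3$-faces, several of whose apices are forced back onto $\partial f$, and the plan is to show that three such long sides cannot be assembled around the single bad face: I would trace the forced closed walk built from these long sides, the shared apexes, and the triangle apices lying on $f$, and aim to certify that it has a forbidden length, using \autoref{LS} to pin down how the incident $3$-faces may sit near the ten-cycles and \autoref{FORBIDDEN} and \autoref{NONDP} to dispose of the degenerate sub-cases in which two of these vertices coincide. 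If a clean forbidden $5$-, $6$-, or $9$-cycle does not fall out directly, the alternative is to extract from the accumulated $3$- and $4$-vertices a ``nearly $2$-degenerate'' chain and derive a contradiction with minimality via \autoref{Reducible} (equivalently \autoref{DP-GREEDY}); the delicate point throughout is the bookkeeping of the shared apexes so that the target walk closes up at exactly the prohibited length.
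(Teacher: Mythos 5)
Your reduction is exactly the paper's first half: a special neighbour can only sit on one of the five free edges; by \autoref{B} each triangle apex is a $4^{+}$-vertex and hence must be one of the two $4$-vertices of any special neighbour whose boundary it lies on; and by pigeonhole three special neighbours would force two of them onto cyclically consecutive free edges, sharing an apex $w$ together with a second triangle at $w$ common to both. Up to that point the argument is correct and coincides with the paper's.

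The gap is that you never derive a contradiction from this configuration, and the route you propose as primary points the wrong way. The contradiction is purely local at the shared apex and uses only the two consecutive special faces; nothing about assembling three ``long sides'' around $f$ is needed, and there is no reason the closed walk you suggest tracing should close up at length $5$, $6$ or $9$ --- a bad face with one or two special neighbours is not excluded by the lemma, so no obstruction of that global kind can exist. What the paper actually does is finish in two lines from the configuration you already reached: since $w$ lies on a special face, $d(w)=4$ exactly; its two neighbours other than the two bad-face vertices of its triangle, say $x$ and $x'$, are the next boundary vertices of the two special faces at $w$, hence non-$4$-vertices of special faces and therefore $3$-vertices; and since both boundary edges of $w$ on each special face control $3$-faces, the fourth face at $w$ is the triangle $wxx'$, so $xx'\in E(G)$. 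A $4$-vertex on a triangle controlled by the bad face whose remaining two neighbours are adjacent $3$-vertices is exactly a configuration forbidden by \autoref{Reducible}, applied to the boundary cycle of one of the special faces (all of whose vertices are $4^{-}$-vertices and whose $E^{+}$-edges control triangles with $3$-vertex apices). You assembled all the ingredients --- the shared apex and the common second triangle --- but stopped short of the degree count on $x,x'$ that makes \autoref{Reducible} applicable; supplying that, rather than the cycle-length search, is what closes the proof.
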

\begin{proof}
Let $f=v_{1}v_{2}\dots v_{10}$ be a bad face and incident with five $3$-faces $v_{1}v_{2}u_{1}, v_{3}v_{4}u_{3}, v_{5}v_{6}u_{5}, v_{7}v_{8}u_{7}, v_{9}v_{10}u_{9}$. Suppose that $f$ is adjacent to $f_{i}$ via edge $v_{i}v_{i+1}$ for $1 \leq i \leq 10$, where the subscripts are taken modulo $10$. Suppose to the contrary that $f$ is adjacent to at least three special faces. Then there exist two special faces $f_{m}$ and $f_{n}$ such that $\vert m - n\vert = 2$ or 8, where $\{m, n\} \subset \{2, 4, 6, 8, 10\}$. Without loss of generality, assume that $f_{2}$ and $f_{4}$ are the two special faces. By \autoref{Reducible} and the definition of special face, $d(u_{1}) = d(u_{3}) = 4$. Let $x_{3}$ and $x_{4}$ be the neighbors of $u_{3}$ other than $v_{3}$ and $v_{4}$. Since $f_{2}$ and $f_{4}$ are special faces, we have that $x_{3}x_{4} \in E(G)$ and $d(x_{3}) = d(x_{4}) = 3$, but this contradicts \autoref{Reducible}. 
\end{proof}

\begin{lemma}\label{BADFACE}
Suppose that $f$ is a $10^{+}$-face and it is not a bad face. Let $t$ be the number of incident bad edges, and $t \geq 1$. Then $3t \leq d(f)$. Moreover, if $d(f) > 3t$, then $f$ is incident with at least $(t + 1)$ $4^{+}$-vertices (repeated vertices are counted as the number of appearance on the boundary). 
\end{lemma}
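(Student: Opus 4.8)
The plan is to study the boundary walk $\partial f$ locally around each bad edge and argue that every bad edge forces a block of three consecutive boundary edges: the bad edge together with two non-bad edges following it. First I would record the two structural facts that drive everything. Since a bad face is a $10$-face carrying ten $3$-vertices, every edge of a bad face, and hence every bad edge, has both endpoints equal to $3$-vertices; in particular no bad edge is incident with a $4^{+}$-vertex. Moreover, each bad face $B$ controls its five incident $3$-faces, so by \autoref{B} the apex of each such $3$-face is a $4^{+}$-vertex (its other two vertices lie on $B$ and are $3$-vertices). I would also note, using that $G$ has no adjacent triangles, that the five $3$-faces of a bad face sit on alternating edges of its bounding $10$-cycle, so that a $10^{+}$-face $f$ can meet $B$ only along one of the five ``free'' edges, say $v_{2}v_{3}$, whose endpoints are flanked on $\partial f$ by the two $4^{+}$ apices $u_{1}$ and $u_{3}$.

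Next I would establish the spacing claim: immediately after a bad edge $v_{2}v_{3}$ on $\partial f$ the next two boundary edges are non-bad. Reading the rotation at the $3$-vertex $v_{3}$ shows that the next boundary edge is $v_{3}u_{3}$, whose two incident faces are $f$ and the $3$-face $v_{3}v_{4}u_{3}$; neither is a bad face, so $v_{3}u_{3}$ is not bad. The following edge is incident with the apex $u_{3}$, a $4^{+}$-vertex, so it cannot be bad either. Hence between any two consecutive bad edges there are at least two non-bad edges. Indexing the $t$ bad edges cyclically around $\partial f$ and summing the gaps gives $d(f)\ge t+2t=3t$, which is the first assertion.

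For the ``moreover'' statement I would count $4^{+}$-appearances gap by gap. Each gap between consecutive bad edges begins with the apex $u_{3}$ of the preceding bad edge and ends with the apex $u_{1}'$ of the following one, both $4^{+}$-vertices. When a gap has exactly two edges these two apices coincide, contributing one $4^{+}$-appearance; when a gap has three or more edges the two apices occupy distinct positions on the walk, contributing at least two. Since each gap has size at least two, the hypothesis $d(f)>3t$ forces at least one gap of size $\ge 3$; that gap contributes $\ge 2$ while the remaining $t-1$ gaps contribute $\ge 1$ each, giving at least $t+1$ incident $4^{+}$-vertices counted with multiplicity.

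The main obstacle is making the rotational and gap arguments robust when $\partial f$ is only a closed walk rather than a simple cycle: I must phrase ``next edge'' and ``gap'' in terms of the corners of the boundary walk and count $4^{+}$-vertices by appearances, and I must verify that the three faces meeting at each bad vertex $v_{3}$ (namely $B$, the incident $3$-face, and $f$) are genuinely distinct, so that the local picture $u_{1},v_{2},v_{3},u_{3}$ on $\partial f$ is correct and $v_{3}$ occurs exactly once there. Pinning down the alternating placement of the five $3$-faces (forced by the absence of adjacent triangles) and the $4^{+}$ status of the apices through \autoref{B} are the two inputs I would establish carefully before running the counting.
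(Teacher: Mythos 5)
Your proposal is correct and follows essentially the same argument as the paper: identify the two triangle apices flanking each bad edge on $\partial f$, use \autoref{B} to certify they are $4^{+}$-vertices, deduce the spacing bound $3t\leq d(f)$, and then count $4^{+}$-appearances gap by gap, with $d(f)>3t$ forcing one gap of length at least three and hence a total of at least $t+1$ appearances. Your write-up is somewhat more explicit than the paper's (which compresses the final count into one sentence), but the underlying mechanism is identical.
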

\begin{proof}
Suppose that $f$ is adjacent to a bad face through $uv$. Let $x$ be the neighbor of $u$ on $f$ and $y$ be the neighbor of $v$ on $f$. Then $u$ and $v$ are bad vertices and the faces controlled by $f$ through $xu$ and $vy$ are all 3-faces. By \autoref{B} and the definition of bad face, $d(x) \geq 4$ and $d(y) \geq4$. It is observed that two bad edges are separated by at least two other edges along the boundary of $f$, this implies that $3t \leq d(f)$. 

By the above discussion, every bad vertex has a $4^{+}$-neighbor along the boundary. Since $3t < d(f)$, there are two bad edges separated by at least two $4^{+}$-vertices, thus $f$ is incident with at least $(t + 1)$ $4^{+}$-vertices. 
\end{proof}

To prove the theorem, we are going to use discharging method. Define the initial charge function $\mu(x)$ on $V \cup F$ to be $\mu(v)=d(v)-6$ for $v\in V$ and $\mu(f)=2d(f)-6$ for $f\in F$. By Euler's formula, we have the following equality, 
\[
\sum_{v\in V(G)}(d(v)-6) + \sum_{f\in F(G)}(2d(f)-6) = -12.
\]

We design suitable discharging rules to change the initial charge function $\mu(x)$ to the final charge function $\mu'(x)$ on $V\cup F$ such that $\mu'(x)\geq0$ for all $x\in V\cup F$, this leads to a contradiction and completes the proof. 

The following are the needed discharging rules.

\begin{enumerate}[label = \bf R\arabic*]
\item Each $4$-face sends $\frac{1}{2}$ to each incident $3$-vertex.
\item Each $6$-face sends $1$ to each incident vertex. 
\item Each $7$-face sends $\frac{3}{2}$ to each incident semi-rich $3$-vertex, $1$ to each other incident vertex.  
\item Each $8$-face sends $\frac{5}{4}$ to each incident vertex. 
\item Each $9$-face sends $\frac{4}{3}$ to each incident vertex. 
\item Suppose that $v$ is a $3$-vertex incident with a $10^{+}$-face $f$ and two other faces $g$ and $h$. 
\begin{enumerate}[label = \bf (\alph*)]
\item If $v$ is incident with three $5^{+}$-faces, then $f$ sends $1$ to $v$. 
\item If $v$ is incident with a $4$-face, then $f$ sends $\frac{5}{4}$ to $v$; 
\item If $f$ is a bad face, $g$ is a $3$-face and $h$ is not a special face, then $f$ sends $\frac{4}{3}$ to $v$ and $h$ sends $\frac{5}{3}$ to $v$. 
\item Otherwise, $f$ sends $\frac{3}{2}$ to $v$. 
\end{enumerate}
\item Let $v$ be a $4$-vertex on a $10^{+}$-face $f$. 
\begin{enumerate}[label = \bf (\alph*)]
\item If $v$ is a rich vertex or a poor vertex of $f$, then $f$ sends $1$ to $v$. 
\item Otherwise, $f$ sends $\frac{1}{2}$ to $v$. 
\end{enumerate}
\item Let $v$ be a 5-vertex on a $10^{+}$-face $f$. 
\begin{enumerate}[label = \bf (\alph*)]
\item If $v$ is incident with two $4^{-}$-face, then $f$ sends $\frac{1}{3}$ to $v$. 
\item If $v$ is incident with exactly one $4^{-}$-face, then $f$ sends $\frac{1}{4}$ to $v$. 
\item Otherwise, $f$ sends $\frac{1}{5}$ to $v$. 
\end{enumerate}
\item Each $(3, 3, 3^{+}, 4^{+})$-face sends $\frac{1}{2}$ to adjacent poor face.
\item Each $(3, 4, 3^{+}, 4^{+})$-face and $(3, 4, 4^{+}, 3^{+})$-face send $\frac{1}{4}$ to each adjacent semi-special face.
\end{enumerate}

It remains to check that the final charge of every element in $V \cup F$ is nonnegative.

(1) Let $v$ be an arbitrary vertex of $G$. 

By \autoref{delta}, $G$ has no $2^{-}$-vertices. If $v$ is a $6^{+}$-vertex, then $\mu'(v) \geq \mu(v) = d(v)-6\geq0$. We may assume that $3 \leq d(v)\leq 5$.

Suppose that $v$ is a $3$-vertex. By \autoref{LS}\ref{e}, $v$ is incident with at most one $4^{-}$-face. If $v$ is incident with no $4^{-}$-face, then it receives at least $1$ from each incident face, and then $\mu'(v) \geq 3 - 6 + 3 \times 1 = 0$. If $v$ is incident with a $4$-face, then it receives at least $\frac{5}{4}$ from each incident $7^{+}$-face, and then $\mu'(v) \geq 3 - 6 + 2 \times \frac{5}{4} + \frac{1}{2} = 0$. If $v$ is incident with a $3$-face and a $7$-face, then the other incident face is not a bad face by \autoref{LS}\ref{c}, and then $\mu'(v) = 3 - 6 + 2 \times \frac{3}{2} = 0$. By \autoref{LS}\ref{d}, if $v$ is incident with a $3$-face, then it is not incident with any $8$- or $9$-face. If $v$ is incident with a $3$-face and two $10^{+}$-faces, then $\mu'(v) \geq 3 - 6 + \min\big\{\frac{4}{3} + \frac{5}{3}, 2 \times \frac{3}{2}\big\} = 0$. 

Suppose that $v$ is a $4$-vertex. By \autoref{LS}\ref{e}, $v$ is incident with at most two $4^{-}$-faces. If $v$ is incident with exactly one $4^{-}$-face, then $\mu'(v)\geq 4 - 6 + 2 \times \frac{1}{2 } + 1 = 0$. If $v$ is incident with two $4^{-}$-faces, then $\mu'(v) \geq 4 - 6 + 2 \times 1 = 0$. If $v$ is incident with no $4^{-}$-face, then $\mu'(v) \geq 4 - 6 + 4 \times 1 > 0$.

Suppose that $v$ is a 5-vertex. By \autoref{LS}\ref{e}, $v$ is incident with at most two $4^{-}$-faces. If $v$ is incident with no $4^{-}$-face, then it receives at least $\frac{1}{5}$ from each incident $5^{+}$-face, and $\mu'(v)\geq 5 - 6 + 5 \times \frac{1}{5} = 0$. If $v$ is incident with exactly one $4^{-}$-face, then it receives at least $\frac{1}{4}$ from each incident $5^{+}$-face, and $\mu'(v) \geq 5 - 6 + 4 \times \frac{1}{4} = 0$. If $v$ is incident with two $4^{-}$-faces, then it receives at least $\frac{1}{3}$ from each incident $5^{+}$-face, and $\mu'(v) \geq 5 - 6 + 3 \times \frac{1}{3} = 0$.

(2) Let $f$ be an arbitrary face in $F(G)$. 

If $f$ is a 3-face, then $\mu'(f)=\mu(f) = 0$. Suppose that $f$ is a 4-face. If $f$ is incident with four $3$-vertices, then $\mu'(f) = 2 - 4 \times \frac{1}{2} = 0$. If $f$ is incident with exactly one $4^{+}$-vertex, then it is adjacent to at most one poor face by \autoref{4-poor}, and then $\mu'(f) \geq 2 - 3 \times \frac{1}{2} - \frac{1}{2} = 0$. If $f$ is a $(3, 3, 4^{+}, 4^{+})$-face, then it is adjacent to at most one poor face and at most two semi-special faces, and then $\mu'(f) \geq 2 - 2 \times \frac{1}{2} - \frac{1}{2} - 2 \times \frac{1}{4} = 0$. If $f$ is a $(3, 4^{+}, 3, 4^{+})$-face, then it sends at most $\frac{1}{4}$ to each adjacent face, and $\mu'(f) \geq 2 - 2 \times \frac{1}{2} - 4 \times \frac{1}{4} = 0$. If $f$ is incident with exactly three $4^{+}$-vertices, then it is adjacent to at most two semi-special faces, and $\mu'(f) \geq 2 - \frac{1}{2} - 2 \times \frac{1}{4} > 0$. If $f$ is incident with four $4^{+}$-vertices, then $\mu'(f) = \mu(f) = 2$.  

If $f$ is a $6$-face, then $\mu'(f) = 6 - 6 \times 1 = 0$. Suppose that $f$ is a 7-face. By \autoref{LS}\ref{c}, $f$ is adjacent to at most one $4^{-}$-face. If $f$ is adjacent to a $4^{-}$-face (see \autoref{7-4}), then $f$ is incident with at most two semi-rich $3$-vertices, which implies that $\mu'(f) \geq 8 - 2 \times \frac{3}{2} - 5 \times 1 = 0$. If $f$ is not adjacent to any $4^{-}$-face, then $f$ sends $1$ to each incident vertex, and $\mu'(f) = 8 - 7 \times 1 > 0$. If $f$ is an $8$-face, then $\mu'(f) = 10 - 8 \times \frac{5}{4} = 0$. If $f$ is a $9$-face, then $\mu'(f) = 12 - 9 \times \frac{4}{3} = 0$.

Suppose that $f$ is a $10^{+}$-face. Let $t$ be the number of incident bad edges. Hence, $f$ is incident with exactly $2t$ bad vertices. By \autoref{BADFACE}, $f$  is incident with at least $t$ $4^{+}$-vertices. Thus, $\mu'(f) \geq 2d(f) - 6 - 2t \times \frac{5}{3} - t \times 1-(d(f) - 3t) \times \frac{3}{2} = \frac{1}{2} d(f) - 6 + \frac{t}{6}$. If $d(f) \geq 12$, then $\mu'(f) \geq 12 \times \frac{1}{2} - 6 + \frac{t}{6}\geq0$. So it suffices to consider $10$-faces and $11$-faces. 

{\bf Suppose that $f$ is an $11$-face}. (i) $t=0$. It follows that $f$ is not incident with any bad vertex, and it sends at most $\frac{3}{2}$ to each incident vertex. If $f$ is incident with a $4^{+}$-vertex, then $\mu'(f) \geq 16 - 10 \times \frac{3}{2} - 1 = 0$. Suppose that $f$ is a $3$-regular face. By \autoref{LS}\ref{e}, every vertex on $f$ is incident with at most one $4^{-}$-face. Since $d(f)$ is odd, $f$ must be incident with a rich $3$-vertex. This implies that $\mu'(f) \geq 16 - 10 \times \frac{3}{2} - 1 = 0$. (ii) $t \geq 1$. It follows that $f$ is incident with exactly $2t$ bad vertices and at least $(t+1)$ $4^{+}$-vertices, and then $\mu'(f) \geq 16 - 2t \times \frac{5}{3} - (t + 1) \times 1 - (11 - (3t + 1)) \times\frac{3}{2} = \frac{t}{6} > 0$.

{\bf Finally we may assume that $f$ is a 10-face}. If $f$ is a special face, then $\mu'(f) = 14 - 8 \times \frac{3}{2} - 2 \times 1 = 0$. If $f$ is a bad face, then it is adjacent to at most two special faces by \autoref{BAD-SPECIAL}, which implies that $\mu'(f) \geq 14 - 4 \times \frac{3}{2} - 6 \times \frac{4}{3} = 0$. 

So we may assume that $f$ is neither a bad face nor a special face. By \autoref{BADFACE}, $t \leq \lfloor \frac{d(f)}{3} \rfloor = 3$.

$\bullet\,\, \bm{t = 0}$. It follows that $f$ is not incident with any bad vertex. Hence, $f$ sends at most $\frac{3}{2}$ to each incident $3$-vertex, at most $1$ to each incident $4$-vertex, and at most $\frac{1}{3}$ to each incident $5$-vertex. If $f$ is incident with a $5^{+}$-vertex, then $\mu'(f) \geq 14 - 9 \times \frac{3}{2} - \frac{1}{3} > 0$. If $f$ is incident with at least two $4$-vertices, then $\mu'(f) \geq 14 - 8 \times \frac{3}{2} - 2 \times 1 = 0$. 

So we may assume that $f$ is incident with at most one $4$-vertex and no $5^{+}$-vertices. If $f$ is incident with a semi-rich $4$-vertex, then $\mu'(f) \geq 14 - 9 \times \frac{3}{2} - \frac{1}{2} = 0$. If $f$ is incident with a rich $4$-vertex and nine $3$-vertices, then at least one of the incident $3$-vertices is rich, and then $\mu'(f) \geq 14 - 1 - 8 \times \frac{3}{2} - 1 = 0$. If $f$ is incident with a poor $4$-vertex, then there exists a rich $3$-vertex incident with $f$, and $\mu'(f) \geq 14 - 1 - 8 \times \frac{3}{2} - 1 = 0$. 

Suppose that $f$ is incident with ten $3$-vertices. If $f$ is adjacent to at most four $4^{-}$-faces, then $\mu'(f) \geq 14 - 8 \times \frac{3}{2} - 2 \times 1 = 0$. If $f$ is adjacent to at least two $4$-faces, then $\mu'(f) \geq 14 - 6 \times \frac{3}{2} - 4 \times \frac{5}{4} = 0$. If $f$ is adjacent to four $3$-faces and one $4$-face, then $f$ must be a poor face and the $4$-face must be $(3, 3, 3^{+}, 4^{+})$-face, and then $\mu'(f) = 14 - 8 \times \frac{3}{2} - 2 \times \frac{5}{4} + \frac{1}{2} = 0$. If $f$ is adjacent to five $3$-faces, then it is a bad face, so we are done. 

$\bullet\,\, \bm{t = 1}$. It follows that $f$ is incident with exactly two bad vertices and at least two $4^{+}$-vertices. If $f$ is incident with a rich $3$-vertex or at least three $4$-vertices, then $\mu'(f) \geq 14 - 2 \times \frac{5}{3} - 3 \times 1 - 5 \times \frac{3}{2} > 0$. If $f$ is incident with a $5^{+}$-vertex, then $\mu'(f) \geq 14 - 2 \times \frac{5}{3} - 1 - \frac{1}{3} - 6 \times \frac{3}{2} > 0$. If $f$ is incident with a semi-rich $4$-vertex, then $\mu'(f) \geq 14 - 2 \times \frac{5}{3} - 1 - \frac{1}{2} - 6 \times \frac{3}{2} > 0$. So we may assume that $f$ is incident with two poor $4$-vertices and eight semi-rich $3$-vertices. Thus $f$ is a $(3, 4, 3, 3, 4, 3, 3, 3, 3, 3)$-face $w_{1}w_{2} \dots w_{10}$, where $w_{3}w_{4}$ is a bad edge, each of $w_{2}w_{3}$ and $w_{4}w_{5}$ controls a $3$-face, each of $w_{1}w_{2}$ and $w_{5}w_{6}$ controls a $4^{-}$-face. If $f$ controls at least one $4$-face by a $(3, 3)$-edge, then $\mu'(f) \geq 14 - 2 \times \frac{5}{3} - 2 \times 1 - 2 \times \frac{5}{4} - 4 \times \frac{3}{2} > 0$. So we may further assume that each of $w_{7}w_{8}$ and $w_{9}w_{10}$ controls a $3$-face. If each of $w_{1}w_{2}$ and $w_{5}w_{6}$ controls a $4$-face, then $\mu'(f) \geq 14 - 2 \times \frac{5}{3} - 2 \times 1 - 2 \times \frac{5}{4} - 4 \times \frac{3}{2} > 0$. If each of $w_{1}w_{2}$ and $w_{5}w_{6}$ controls a $3$-face, then $f$ must be a special face, a contradiction. If exactly one of $w_{1}w_{2}$ and $w_{5}w_{6}$ controls a $4$-face, then $f$ must be a semi-special face. In this case the controlled $4$-face must be a $(3, 4, 3^{+}, 4^{+})$-face or $(3, 4, 4^{+}, 3^{+})$-face due to \autoref{Reducible}. Thus $\mu'(f) \geq 14 - 2 \times \frac{5}{3} - 2 \times 1 - \frac{5}{4} - 5\times\frac{3}{2} + \frac{1}{4} \geq 0$. 

$\bullet\,\, \bm{t = 2}$. It follows that $f$ is incident with exactly four bad vertices and at least three $4^{+}$-vertices. If $f$ is incident with at least four $4^{+}$-vertices, then $\mu'(f) \geq 14 - 4 \times \frac{5}{3} - 4 \times 1 - (10 - 4 - 4) \times \frac{3}{2} > 0$. Thus, $f$ is incident with exactly four bad vertices and exactly three $4^{+}$-vertices. If there is a semi-rich $4^{+}$-vertex, then $\mu'(f) \geq 14 - 4 \times \frac{5}{3} - 2 \times 1 - \frac{1}{2} - 3 \times \frac{3}{2} > 0$. Therefore, the three $4^{+}$-vertices are all poor, so there must be a rich $3$-vertex. This implies that $\mu'(f) \geq 14 - 4 \times \frac{5}{3} - 3 \times 1 - 2 \times \frac{3}{2} - 1 > 0$. 

$\bullet\,\, \bm{t = 3}$. It follows that $f$ is incident with six bad vertices and four $4^{+}$-vertices. Thus, $\mu'(f) \geq 14 - 6 \times \frac{5}{3} - 4 \times 1 =  0$. 
\end{proof}

\section{Distance of triangles at least two}
In this section, we give two Bordeaux type results on planar graphs with distance of triangles at least two. 
\subsection{Planar graphs without 5-, 6- and 8-cycles}
\begin{figure}%
\centering
\subcaptionbox{special face \label{fig:subfig:-a}}[0.4\linewidth]
{
\begin{tikzpicture}
\def\n{7}
\pgfmathsetmacro \i{\n-1}
\foreach \x in {0,...,\i}
{
\def\pointnameA{A\x}
\coordinate (\pointnameA) at ($(\x*360/\n + 90:1)$);
}

\filldraw[fill=gray] (A0)
\foreach \x in {0,...,\i}
 {-- (A\x)}  -- cycle;
 
\foreach \x in {0,...,\i}
{
\node[circle, inner sep =1, fill, draw] () at (A\x) {};
}

\foreach \x in {0,...,27}
{
\def\pointnameB{B\x}
\coordinate (\pointnameB) at ($(\x*90/\n + 90:1.4)$);
}

\draw (A0)--(B1)--(B3)--(A1);
\draw (A2)--(B9)--(B11)--(A3);
\draw (A4)--(B17)--(B19)--(A5);
\draw (A6)--(B25)--(B27)--(A0);
\node[circle, inner sep =1, fill, draw] () at (B1) {};
\node[circle, inner sep =1, fill, draw] () at (B3) {};
\node[circle, inner sep =1, fill, draw] () at (B9) {};
\node[circle, inner sep =1, fill, draw] () at (B11) {};
\node[circle, inner sep =1, fill, draw] () at (B17) {};
\node[circle, inner sep =1, fill, draw] () at (B19) {};
\node[circle, inner sep =1, fill, draw] () at (B25) {};
\node[circle, inner sep =1, fill, draw] () at (B27) {};
\end{tikzpicture}}
\subcaptionbox{poor face \label{fig:subfig:-b}}[0.4\linewidth]
{
\begin{tikzpicture}
\def\n{7}
\pgfmathsetmacro\i{\n-1}
\foreach \x in {0,...,\i}
{
\def\pointname{A\x}
\coordinate (\pointname) at ($(\x*360/\n + 90:1)$);
}

\filldraw[fill=gray] (A0)
\foreach \x in {0,...,\i}
 {-- (A\x)}  -- cycle;
 
\foreach \x in {0,...,\i}
{
\node[circle, inner sep =1, fill, draw] () at (A\x) {};
}

\foreach \y in {0,...,27}
{
\def\pointname{B\y}
\coordinate (\pointname) at ($(\y*90/\n + 90:1.4)$);
}
\draw (A0)--(B1)--(B3)--(A1);
\draw (A2)--(B9)--(B11)--(A3);
\draw (A4)--(B17)--(B19)--(A5);
\draw (A6)--(B24);
\node[circle, inner sep =1, fill, draw] () at (B1) {};
\node[circle, inner sep =1, fill, draw] () at (B3) {};
\node[circle, inner sep =1, fill, draw] () at (B9) {};
\node[circle, inner sep =1, fill, draw] () at (B11) {};
\node[circle, inner sep =1, fill, draw] () at (B17) {};
\node[circle, inner sep =1, fill, draw] () at (B19) {};
\node[circle, inner sep =1, fill, draw] () at (B24) {};
\end{tikzpicture}}
\caption{Certain $7$-faces in \autoref{MRESULTb}.}
\end{figure}

Recall that our second main result is the following. 
\MRESULTb*
\begin{proof}
Suppose to the contrary that $G$ is a counterexample with the number of vertices as small as possible. We may assume that $G$ has been embedded in the plane. Thus, $G$ is a minimal non-DP-$3$-colorable graph. 

\begin{lemma}\label{NO-ADJ}
\mbox{}
\begin{enumerate}[label = \bf(\alph*)]
\item\label{1} There are no $5$-faces and no $6$-faces. 
\item\label{2} A 3-face cannot be adjacent to an $8^{-}$-face.
\item\label{3} There are no adjacent $6^{-}$-faces. 
\end{enumerate}
\end{lemma}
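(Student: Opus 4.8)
The plan is to prove parts (a), (b), (c) in this order, in each case reducing the configuration to one of the three prohibitions at our disposal: the absence of $5$-, $6$- and $8$-cycles, the triangle-distance bound $\dist^{\triangledown}\geq 2$, and $\delta(G)\geq 3$ from \autoref{delta}. The unifying observation I would record first is that, because $\delta(G)\geq 3$ rules out degree-$1$ vertices, the boundary walk of a $d$-face with $d\leq 6$ cannot traverse a bridge, since the far endpoint of such a bridge would then be a leaf. Hence a non-cyclic short facial walk must split, at a cut vertex, into two cycles whose lengths sum to $d$.

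For part (a), let $W$ be the boundary walk of a putative $5$- or $6$-face. If $W$ is a cycle, it is a $5$- or a $6$-cycle, which $G$ excludes. If $W$ is not a cycle, then by the observation above it is a union of two cycles $C_{1},C_{2}$ meeting at a cut vertex with $|C_{1}|+|C_{2}|=d\in\{5,6\}$. Since each $|C_{i}|\geq 3$, the only possibility is $d=6$ and $|C_{1}|=|C_{2}|=3$, i.e.\ two triangles sharing a vertex; these lie at distance $0<2$, contradicting $\dist^{\triangledown}\geq 2$. (The split $3+2$ for $d=5$ would require a bridge, already excluded.) This eliminates both $5$- and $6$-faces.

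For part (b), a $3$-face is necessarily a triangle $xyz$. Suppose it shares the edge $xy$ with an $8^{-}$-face $f$; by part (a) we may assume $d(f)\in\{3,4,7,8\}$. Since $xy$ borders two distinct faces it is not a bridge, so it lies on a cycle $C$ in the boundary of $f$, and replacing $xy$ by the path $xzy$ turns $C$ into a closed walk of length $|C|+1$. When $|C|=3$ this is a triangle adjacent to $xyz$, violating $\dist^{\triangledown}\geq 2$; when $|C|=4$ it is a $5$-cycle; when $|C|\in\{5,6\}$ the cycle $C$ itself is already forbidden; and when $|C|=7$ we obtain an $8$-cycle. In each case a forbidden cycle or a forbidden pair of triangles appears, and the alternative that $f$ is bounded by a single $8$-cycle is itself excluded, so $f$ cannot exist. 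Part (c) then follows from the first two: by part (a) every $6^{-}$-face is a $3$- or a $4$-face; two adjacent $3$-faces are adjacent triangles, a $3$-face adjacent to a $4$-face is forbidden by part (b), and two $4$-faces sharing an edge $xy$ combine, again by replacing $xy$, into a $6$-cycle.

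The step I expect to be the main obstacle is controlling the degenerate cases where the combined closed walk fails to be a simple cycle, namely when the third triangle vertex $z$ (or one of the two private vertices of the adjacent $4$-faces) coincides with another vertex on the boundary of $f$. There the expected $(|C|+1)$-cycle is short-circuited into a shorter closed walk, and I would argue that every such shortcut either yields an even smaller cycle whose length is again among the forbidden values $5,6,8$, or forces a second triangle within distance less than two of $xyz$; both outcomes close the case. These coincidence analyses, rather than the length bookkeeping, are where the care is needed.
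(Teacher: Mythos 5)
Your argument is correct and follows essentially the same route as the paper: classify the boundary walk of each short face as a single cycle or as two cycles meeting at a cut vertex (or joined by a bridge), then splice the adjacent faces together along the shared edge to produce a forbidden $5$-, $6$- or $8$-cycle or two triangles at distance zero. The coincidence cases you defer at the end are exactly resolved by the chordlessness of $7^{-}$-cycles, which the paper records explicitly before running the same case analysis.
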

\begin{proof}
Since every $5$-face is bounded by a $5$-cycle, but there is no $5$-cycles in $G$, this implies that there is no $5$-faces in $G$. Since there is no $6$-cycles in $G$, the boundary of every $6$-face consists of two triangles, thus the distance of these triangles is zero, a contradiction. Therefore, there is no $5$-faces and no $6$-faces in $G$. 

It is easy to check that every $7^{-}$-cycle is chordless. Let $f$ be a $3$-face. If $f$ is adjacent to a $4$-face $g$, then they form a $5$-cycle with a chord, a contradiction. Suppose that $g$ is a $7$-face. Then $g$ may be bounded by a cycle or a closed walk with a cut-vertex. If $g$ is bounded by a cycle and it is adjacent to $f$, then these two cycles form an $8$-cycle with a chord, a contradiction. If the boundary of $g$ contains a cut-vertex, then the boundary consists of a $3$-cycle and a $4$-cycle, and neither the $3$-cycle nor the $4$-cycle can be adjacent to the $3$-face $f$. If $g$ is an $8$-face, then the boundary of $g$ consists of two $4$-cycles, or two triangles and a cut-edge, but no edge on such boundary can be adjacent to the $3$-face $f$. 

By the hypothesis and fact that a $3$-face cannot be adjacent to an $8^{-}$-face, it suffices to prove that there is no adjacent $4$-faces. Since every $4$-cycle has no chords, two adjacent $4$-faces must form a $6$-cycle with a chord, a contradiction. 
\end{proof}

A 7-face $f$ is {\bf special} if $f$ is incident with six semi-weak $3$-vertices and a poor $4$-vertex, see \autoref{fig:subfig:-a}. A 7-face $f$ is {\bf poor} if $f$ is incident with six semi-weak $3$-vertices and a strong $3$-vertex, see \autoref{fig:subfig:-b}. Note that a $7$-face is not adjacent to any $3$-face by \autoref{NO-ADJ}\ref{2}. 

\begin{lemma}
Each poor 7-face is adjacent to three $(3, 3, 3^{+}, 4^{+})$-faces.
\end{lemma}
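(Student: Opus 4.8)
The plan is to first pin down the rigid local structure forced around a poor $7$-face, and then to rule out the forbidden degree pattern on each of the three adjacent $4$-faces by means of the reducibility criterion in \autoref{Reducible}.

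Write the poor $7$-face as $f=w_{0}w_{1}\cdots w_{6}$, where $w_{0},\dots,w_{5}$ are the six semi-weak $3$-vertices and $w_{6}$ is the strong $3$-vertex. Each semi-weak vertex lies on exactly one $4^{-}$-face: two incident $4$-faces would be adjacent $6^{-}$-faces, which is forbidden by \autoref{NO-ADJ}. Since $w_{6}$ lies on no $4^{-}$-face, the $4$-face through a semi-weak $w_{i}$ must sit on one of the two $f$-edges at $w_{i}$, while the edges $w_{5}w_{6}$ and $w_{6}w_{0}$ carry no $4$-face. A short forcing argument then pairs the semi-weak vertices, so $f$ is adjacent to precisely three $4$-faces $g_{1},g_{2},g_{3}$ lying on $w_{0}w_{1}$, $w_{2}w_{3}$, $w_{4}w_{5}$, as in \autoref{fig:subfig:-b}. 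By the symmetry of the three positions, it suffices to prove that $g_{1}$ is a $(3,3,3^{+},4^{+})$-face.

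Write $g_{1}=w_{0}w_{1}qp$, where $p$ is the third neighbour of $w_{0}$ and $q$ the third neighbour of $w_{1}$, and assume for contradiction that $d(p)=d(q)=3$, i.e.\ $g_{1}$ is a $(3,3,3,3)$-face. I would invoke \autoref{Reducible} with $\mathcal{C}=f$, oriented so that the controlling edge is $w_{1}w_{0}$; because every vertex of $f$ is a $3$-vertex, $X=\varnothing$ and $E^{+}=\{w_{0}w_{1}\}$. Conditions (i)--(iv) are then immediate: $w_{0}w_{1}$ controls the $4$-cycle $g_{1}$; the vertices of $f$ together with $p,q$ are pairwise distinct, because the chordlessness of $f$ forces $p,q\notin V(f)$; every vertex of $f$ is a $4^{-}$-vertex; and $p,q$ are $3$-vertices by assumption.

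The whole difficulty lies in condition (v): the vertex $u\in\{p,q\}$ adjacent to the initial vertex of the constructed path must have a neighbour off the configuration. As I may orient $\mathcal{C}$ either way, $u$ can be chosen to be $p$ or $q$, so it is enough to show that $p$ and $q$ do not both send their third edge back onto $f$. Suppose they did, say $p\sim w_{i}$ and $q\sim w_{j}$ with $i,j\in\{2,\dots,6\}$. The two cycles formed by $pw_{0}$, $pw_{i}$ and the two $w_{0}$--$w_{i}$ arcs of $f$ already force $i\in\{2,5\}$, since the other positions create a forbidden short cycle, e.g.\ $w_{i}=w_{3}$ gives the $5$-cycle $pw_{0}w_{1}w_{2}w_{3}$ and $w_{i}=w_{6}$ the $8$-cycle $pw_{0}w_{1}\cdots w_{6}$; symmetrically $j\in\{3,6\}$. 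Checking the four surviving pairs then always yields a forbidden $5$- or $6$-cycle through $p,q$ and a sub-arc of $f$, for instance $(i,j)=(2,3)$ gives the $6$-cycle $w_{0}w_{1}qw_{3}w_{2}p$ and $(i,j)=(5,3)$ gives the $5$-cycle $pw_{5}w_{4}w_{3}q$. Hence at least one of $p,q$ has an external neighbour, condition (v) holds for the corresponding orientation, and \autoref{Reducible} delivers the contradiction. The step I expect to be the genuine obstacle is exactly this verification of condition (v): it is not automatic, and it is precisely here that the exclusion of $5$-, $6$- and $8$-cycles is used essentially; the remainder is bookkeeping that records the forced shape of the poor face and matches it to the hypotheses of \autoref{Reducible}.
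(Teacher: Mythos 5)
Your proof is correct, but it takes a genuinely different route from the paper's. The paper disposes of this lemma in two lines via \autoref{NONDP}: if an adjacent $4$-face $g$ had four $3$-vertices, then the subgraph induced by $V(f)\cup V(g)$ would be a $2$-connected induced subgraph all of whose vertices have degree $3$ in $G$, yet it is neither a cycle nor a complete graph --- a contradiction. That argument needs no ``escape vertex'' and hence no analogue of condition (v), which is precisely the step you correctly identify as the real work in your approach. You instead apply \autoref{Reducible} with $\mathcal{C}=f$ and $E^{+}=\{w_{0}w_{1}\}$, and your verification of condition (v) is sound: the chordlessness of the $7$-cycle restricts the third neighbours of $p$ and $q$ on $f$ to positions $i\in\{2,5\}$ and $j\in\{3,6\}$, and each of the four resulting pairs does produce a forbidden $5$- or $6$-cycle (the two cases you omit, $(2,6)$ and $(5,6)$, give the $6$-cycles $pw_{2}w_{1}qw_{6}w_{0}$ and $pw_{5}w_{6}qw_{1}w_{0}$), so at least one of $p,q$ has an external neighbour and the orientation of $\mathcal{C}$ can be chosen accordingly. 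What your route buys is self-containedness --- \autoref{Reducible} is proved in the paper, whereas \autoref{NONDP} is imported from an external reference --- and it matches how the paper itself treats the special $7$-faces in \autoref{Special}; it also inherits no new hypotheses beyond the ``no adjacent $4^{-}$-cycles'' assumption that the paper already relies on when invoking \autoref{Reducible} in this section. The cost is the case analysis, which the paper's structural theorem renders unnecessary.
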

\begin{proof}
Suppose that $f = w_{1}w_{2}\dots w_{7}$ is a poor $7$-face and it is adjacent to a $4$-face $g = u_{1}w_{2}w_{3}u_{4}$. Since $f$ is incident with seven $3$-vertices, it must be bounded by a $7$-cycle. Note that every $7$-cycle has no chords, we have that $\{u_{1}, u_{2}\} \cap \{w_{1}, w_{2}, \dots, w_{7}\} = \emptyset$. The subgraph induced by $\{w_{1}, w_{2}, \dots, w_{7}\} \cup \{u_{1}, u_{2}\}$ is $2$-connected, and it is neither a complete graph nor a cycle. By \autoref{NONDP}, $g$ must be incident with a $4^{+}$-vertex. 
\end{proof}

Applying \autoref{Reducible} to a special $7$-face, we get the following result. 
\begin{lemma}\label{Special}
If $f$ is a special $7$-face and it controls a $(3, 3, 3, 3)$-face, then each of the face controlled by $(3, 4)$-edge has at least two $4^{+}$-vertices. 
\end{lemma}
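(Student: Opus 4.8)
The plan is to argue by contradiction and invoke \autoref{Reducible}, which applies because, by \autoref{NO-ADJ}, $G$ has no adjacent $4^{-}$-cycles. Write the special $7$-face as $f = A_0A_1A_2A_3A_4A_5A_6$, where $A_0$ is the poor $4$-vertex and $A_1,\dots,A_6$ are the six semi-weak $3$-vertices. The two edges at $A_0$, namely $A_0A_1$ and $A_6A_0$, are the $(3,4)$-edges and each controls a $4$-face, while the $(3,3)$-edges $A_2A_3$ and $A_4A_5$ control the other two $4$-faces. A face controlled by a $(3,4)$-edge contains the $4$-vertex $A_0$, so it is never a $(3,3,3,3)$-face; hence the hypothesized $(3,3,3,3)$-face is controlled by one of the $(3,3)$-edges, say $A_2A_3$, whose controlled face I write as $A_2c_1c_2A_3$ (with $c_2$ adjacent to $A_3$), where $c_1$ and $c_2$ are $3$-vertices.

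To show that the face $A_0b_1b_2A_1$ controlled by $A_0A_1$ (with $b_2$ adjacent to $A_1$) has at least two $4^{+}$-vertices, I would suppose otherwise. Since $A_0$ is a $4$-vertex and $A_1$ a $3$-vertex, this forces $b_1$ and $b_2$ to be $3$-vertices. I then take $\mathcal{C}$ to be the bounding $7$-cycle of $f$, oriented so that the forward edge of $A_0$ is $A_0A_1$ and the wrap-around edge is the $(3,3)$-edge $A_2A_3$; concretely $\mathcal{C}=A_3A_4A_5A_6A_0A_1A_2$, so that $X$ selects only $A_0$ and $E^{+}=\{A_0A_1,\,A_2A_3\}$. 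Conditions (i)--(iv) of \autoref{Reducible} are then immediate: each edge of $E^{+}$ controls a $4$-cycle; the vertices $A_0,\dots,A_6,b_1,b_2,c_1,c_2$ are pairwise distinct (a coincidence would force a degree violation or a forbidden short cycle); every vertex of $\mathcal{C}$ is a $4^{-}$-vertex; and $b_1,b_2,c_1,c_2$ are all $3$-vertices, the first pair by the contrary assumption and the second by the $(3,3,3,3)$ hypothesis. The edge $A_6A_0$ is treated identically after reorienting $\mathcal{C}$, and if instead it is $A_4A_5$ that controls the $(3,3,3,3)$-face, I use $A_4A_5$ as the wrap-around edge.

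The step I expect to be delicate is condition (v): the vertex $u$ of the wrap-controlled cycle, here $u=c_2$, must have a neighbor off both $\mathcal{C}$ and the controlled cycles. As $c_2$ is a $3$-vertex whose two neighbors $c_1$ and $A_3$ already lie in the configuration, I must rule out that its third neighbor $z$ is one of the configuration vertices. Each $A_i$ with $i\geq1$ is a $3$-vertex with fully determined neighborhood and $A_0$ is saturated at degree $4$, so $z$ is no $A_i$; and $z=c_1$ is the existing neighbor. The only remaining candidates are $b_1$ and $b_2$, but $z=b_2$ would create the $5$-cycle $c_2b_2A_1A_2c_1$ and $z=b_1$ the $6$-cycle $c_2b_1b_2A_1A_2c_1$, both forbidden in $G$. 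Hence $z$ is external and condition (v) holds.

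With all five conditions verified, \autoref{Reducible} rules out the configuration, contradicting its presence in $G$; thus $A_0A_1$ controls a face with at least two $4^{+}$-vertices, and the symmetric argument for $A_6A_0$ completes the proof. The only genuinely careful bookkeeping is turning each possible internal location of the neighbor $z$ into a degree violation or a short $5$- or $6$-cycle; everything else is a direct transcription of the special-face configuration into the hypotheses of \autoref{Reducible}.
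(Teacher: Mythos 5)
Your proposal is correct and takes exactly the paper's route: the paper's entire proof of this lemma is the single sentence ``Applying \autoref{Reducible} to a special $7$-face, we get the following result,'' and your write-up supplies precisely the verification it leaves implicit --- the choice of $\mathcal{C}$ with the $(3,3)$-edge controlling the $(3,3,3,3)$-face as the wrap-around edge $v_m v_1$, so that $X=\{A_0\}$ and $E^{+}$ consists of that edge together with the $(3,4)$-edge under consideration, plus the check of conditions (i)--(v), including the $5$-/$6$-cycle argument showing the third neighbour of $u=c_2$ lies outside the configuration. The only caveat, which applies equally to the paper's own one-line proof, is that \autoref{Reducible} formally requires no adjacent $4^{-}$-\emph{cycles} while \autoref{NO-ADJ} only rules out adjacent $4^{-}$-\emph{faces}; this does not affect the substance of your argument.
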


Define the initial charge function $\mu(x)$ on $V \cup F$ to be $\mu(v)=d(v)-6$ for $v\in V$ and $\mu(f)=2d(f)-6$ for $f\in F$. By Euler's formula, we have the following equality, 
\[
\sum_{v\in V(G)}(d(v)-6) + \sum_{f\in F(G)}(2d(f)-6) = -12.
\]

We give some discharging rules to change the initial charge function $\mu(x)$ to the final charge function $\mu'(x)$ on $V\cup F$ such that $\mu'(x)\geq0$ for all $x\in V\cup F$, which leads to a contradiction.

The following are the discharging rules.
\begin{enumerate}[label = \bf R\arabic*]
  \item\label{Ru-1} Each 4-face sends $\frac{1}{2}$ to each incident $3$-vertex.
  \item\label{Ru-2} Each $7^+$-face sends $\frac{3}{2}$ to each incident weak $3$-vertex, $\frac{5}{4}$ to each incident semi-weak $3$-vertex, 1 to each incident strong $3$-vertex.
  \item\label{Ru-3} Each $7^+$-face sends 1 to each incident poor $4$-vertex, $\frac{3}{4}$ to each incident semi-rich $4$-vertex, $\frac{1}{2}$ to each incident rich $4$-vertex.
  \item\label{Ru-4} Each $7^+$-face sends $\frac{1}{3}$ to each incident 5-vertex.
  \item\label{Ru-5} Each $(3, 3, 3^{+}, 4^{+})$-face sends $\frac{1}{4}$ to each adjacent poor 7-face and each adjacent special 7-face through $(3, 3)$-edge, respectively.
  \item\label{Ru-6} Each $(3, 4, 3^{+}, 4^{+})$-face and $(3, 4, 4^{+}, 3^{+})$-face sends $\frac{1}{4}$ to each adjacent special 7-face through $(3, 4)$-edge.
\end{enumerate}
It remains to check that the final charge of every element in $V\cup F$ is nonnegative.

(1) Let $v$ be an arbitrary vertex of $G$. 

By \autoref{delta}, $G$ has no $2^{-}$-vertices. If $v$ is a $6^{+}$-vertex, then it is not involved in the discharging procedure, hence $\mu'(v) = \mu(v) =  d(v) - 6 \geq 0$. Next, we may assume that $3 \leq d(v) \leq 5$.

Suppose that $v$ is a $3$-vertex. If $v$ is incident with no $4^{-}$-face, then it is incident with three $7^{+}$-faces, and then $\mu'(v) = \mu(v) + 3 \times 1 = 0$. If $v$ is incident with a 3-face, then the other two incident faces are $9^{+}$-faces by \autoref{NO-ADJ}\ref{2}, and then $\mu'(v) = \mu(v) + 2 \times \frac{3}{2} = 0$. If $v$ is incident with a 4-face, then the other two incident faces are $7^{+}$-faces by \autoref{NO-ADJ}\ref{3}, and then $\mu'(v) = \mu(v) + 2 \times \frac{5}{4} + \frac{1}{2} = 0$.

Suppose that $v$ is a $4$-vertex. By \autoref{NO-ADJ}\ref{3}, $v$ is incident with at most two $4^-$-faces. If $v$ is incident with no $4^{-}$-face, then it is incident with four $7^{+}$-faces, and then $\mu'(v) =  \mu(v) + 4 \times \frac{1}{2} = 0$. If $v$ is incident with exactly one $4^-$-face, then $\mu'(v) = \mu(v) + 2 \times \frac{3}{4} + \frac{1}{2} = 0$. If $v$ is incident with exactly two $4^-$-faces, then $\mu'(v) = \mu(v) + 2 \times 1 = 0$.

Suppose that $v$ is a 5-vertex. By \autoref{NO-ADJ}\ref{3}, $v$ is incident with at most two $4^-$-faces. Therefore, it is incident with at least three $7^{+}$-faces, and $\mu'(v) \geq \mu(v) + 3 \times \frac{1}{3} = 0$. 

(2) Let $f$ be an arbitrary face in $F(G)$. 

Since the distance of triangles is at least two, each $k$-face is adjacent to at most $\lfloor\frac {k}{3}\rfloor$ triangular-faces, thus $f$ contains at most $2\times\lfloor\frac {k}{3}\rfloor$ weak $3$-vertices. As observed above, $G$ has no 5-face and 6-face. If $f$ is a 3-face, then it is not involved in the discharging procedure, and then $\mu'(f) = \mu(f ) = 0$.

Suppose that $f$ is a 4-face. If $f$ is incident with four $3$-vertices, then $\mu'(f) = \mu(f) - 4 \times \frac{1}{2} = 0$. If $f$ is incident with exactly one $4^{+}$-vertex, then $f$ sends at most $\frac{1}{4}$ through each incident $(3, 3)$-edge, and then $\mu'(f) \geq \mu(f) - 3 \times \frac{1}{2} - 2 \times \frac{1}{4} = 0$. If $f$ is incident with at least two $4^{+}$-vertices, then $\mu'(f) \geq \mu(f) - 2 \times \frac{1}{2} - 4 \times \frac{1}{4} = 0$. 

If $f$ is an 8-face, then it is not adjacent to any $3$-face and it sends at most $\frac{5}{4}$ to each incident vertex, and then $\mu'(f) \geq \mu(f) - 8 \times \frac{5}{4} = 0$.

\begin{figure}
\centering
\begin{tikzpicture}
\def\n{9}
\pgfmathsetmacro\i{\n-1}
\foreach \x in {0,...,\i}
{
\def\pointname{A\x}
\coordinate (\pointname) at ($(\x*360/\n - 90:1)$);
}

\filldraw[fill=gray] (A0)
\foreach \x in {0,...,\i}
 {-- (A\x)} -- cycle;
 
\foreach \x in {0,...,\i}
{
\node[circle, inner sep =1, fill, draw] () at (A\x) {};
}

\foreach \y in {0,...,35}
{
\def\pointname{B\y}
\coordinate (\pointname) at ($(\y*90/\n - 90:1.4)$);
}
\draw (A1)--(B6)--(A2);
\draw (A4)--(B18)--(A5);
\draw (A7)--(B30)--(A8);
\draw (A4)--(B15);
\draw (A3)--(B11);
\draw[dotted] (A3)--(B13);
\draw[dotted] (A4)--(B16);
\draw (A6)--(B23);
\draw[dotted] (A6)--(B25);
\draw (A0)--(B35);
\draw[dotted] (A0)--(B1);
\node[circle, inner sep =1, fill, draw] () at (B6) {};
\node[circle, inner sep =1, fill, draw] () at (B18) {};
\node[circle, inner sep =1, fill, draw] () at (B30) {};
\end{tikzpicture}
\caption{A $9$-face incident with exactly five weak $3$-vertices.}
\label{9-face}
\end{figure}
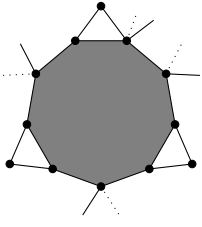

Suppose that $f$ is a $9$-face. Recall that $f$ is incident with at most six weak $3$-vertices. If $f$ is incident with exactly six weak $3$-vertices, then $f$ sends at most $1$ to each other incident vertex, and then $\mu'(f) \geq \mu(f) - 6 \times \frac{3}{2} - (9 - 6) \times 1 = 0$. If $f$ is incident with exactly five weak $3$-vertices, then $f$ must be adjacent to three $3$-faces and one of the six incident vertices on triangles must be a $4^+$-vertex (see \autoref{9-face}), and then $\mu'(f) \geq \mu(f) - 5 \times \frac{3}{2} - 1 - \frac{5}{4} - 2 \times 1 > 0$. If $f$ is incident with exactly four weak $3$-vertices and at least one $4^{+}$-vertex, then $\mu'(f) \geq \mu(f) - 4 \times \frac{3}{2} - 1 - (9 - 4 - 1) \times \frac{5}{4} = 0$. If $f$ is incident with exactly four weak $3$-vertices and no $4^{+}$-vertex, then $f$ is incident with at least one strong $3$-vertex and at most four semi-weak $3$-vertices, and then $\mu'(f) \geq \mu(f) - 4 \times \frac{3}{2} - 4 \times \frac{5}{4} - 1 = 0$. If $f$ is incident with at most three weak $3$-vertices, then $\mu'(f) \geq \mu(f) - 3 \times \frac{3}{2} - (9 - 3) \times \frac{5}{4} = 0$.

If $f$ is a $10^+$-face, then $\mu'(f) \geq \mu(f) - 2 \times \lfloor\frac {d(f)}{3}\rfloor \times \frac{3}{2} - \left(d(f) - 2 \times \lfloor\frac {d(f)}{3}\rfloor\right)\times \frac{5}{4}\geq0$.

Suppose that $f$ is a 7-face. By \autoref{NO-ADJ}\ref{2}, $f$ is not incident with any weak $3$-vertex. It is observed that $f$ is incident with at most six semi-weak $3$-vertices. If there is an incident vertex receives at most $\frac{1}{2}$ from $f$, then $\mu'(f) \geq \mu(f) - \frac{1}{2} - (7 - 1) \times \frac{5}{4} = 0$. So we may assume that $f$ is incident with seven $4^-$-vertices and no rich $4$-vertex. If $f$ is incident with at most four semi-weak $3$-vertices, then $\mu'(f) \geq \mu(f) -  4 \times \frac{5}{4} - 3 \times 1 = 0$. So we may further assume that $f$ is incident with at least five semi-weak $3$-vertices and at most two $4$-vertices. If $f$ is incident with two semi-rich $4$-vertices, then $\mu'(f) = \mu(f) - 2 \times \frac{3}{4} - (7 - 2) \times \frac{5}{4} > 0$. If $f$ is incident with a semi-rich $4$-vertex and a poor $4$-vertex, then $\mu'(f) = \mu(f) - \frac{3}{4} - 1 - (7 - 2) \times \frac{5}{4} = 0$. It is impossible that $f$ is incident with five semi-weak $3$-vertices and two poor $4$-vertices. 

In the following, assume that $f$ is incident with at most one $4$-vertex and at least five semi-weak $3$-vertices. If $f$ is incident with a semi-rich $4$-vertex, then it is incident with at most five semi-weak $3$-vertices, and then $\mu'(f) \geq \mu(f) - \frac{3}{4} - 5 \times \frac{5}{4} - 1 = 0$. Suppose that $f$ is incident with a poor $4$-vertex, then it must be adjacent to six semi-weak $3$-vertices, \ie $f$ is a special 7-face, see \autoref{fig:subfig:-a}. If $f$ controls two $(3, 3, 3^{+}, 4^{+})$-faces through $(3, 3)$-edges, then $\mu'(f) = \mu(f) - 1 - 6 \times \frac{5}{4} + 2 \times \frac{1}{4} = 0$. Then we may assume that $f$ controls at least one $(3, 3, 3, 3)$-face. By \autoref{Special}, $f$ controls two $4$-faces incident with at least two $4^{+}$-vertices through $(3, 4)$-edges, thus $\mu'(f) = \mu(f) - 1 - 6 \times \frac{5}{4} + 2 \times \frac{1}{4} = 0$. Finally, we may assume that $f$ is incident with seven $3$-vertices. In this case, $f$ can only be a poor face, see \autoref{fig:subfig:-b}. By \autoref{Reducible}, $f$ is incident with three $(3, 3, 3^{+}, 4^{+})$-faces, thus $\mu'(f) = \mu(f) - 1 - 6 \times \frac{5}{4} + 3 \times \frac{1}{4} > 0$.
\end{proof}

\subsection{Planar graphs without 4-, 5- and 7-cycles}
The third main result can be derived from the following theorem on degeneracy. 
\MRESULTc*
\begin{proof}
Suppose that $G$ is a planar graph satisfying all the hypothesis but the minimum degree is at least three. Without loss of generality, we may assume that $G$ is connected and it has been embedded in the plane. 
\begin{lemma}\label{NOADJ}
\mbox{}
\begin{enumerate}[label = \bf(\alph*)]
\item\label{NOADJ-1} There is no $4$-, $5$-, $7$-faces. Every 6-face is bounded by a 6-cycle. 
\item\label{NOADJ-2} A 3-face cannot be adjacent to a $7^{-}$-face.
\end{enumerate}
\end{lemma}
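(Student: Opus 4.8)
The plan is to exploit the standing assumptions of this proof---$G$ is connected, plane, with $\delta(G)\ge 3$, no $4$-, $5$-, $7$-cycles, and every two triangles at distance at least two---and to read off all the information from the facial boundary walks. The single combinatorial engine is the identity that, for a face $f$ of a connected plane graph, $d(f)$ equals the sum of the lengths of the cycles met along the facial walk plus twice the number of bridge-edges traversed. Three elementary facts then do the work: the only cycles of length at most $7$ available in $G$ are $3$-cycles and $6$-cycles (lengths $4,5,7$ are forbidden); a pendant bridge endpoint would be a $1$-vertex, which $\delta(G)\ge 3$ forbids; and two triangles sharing a vertex or an edge are at distance $0<2$, which the distance hypothesis forbids.

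For part (a) I would run this identity for each target value of $d(f)$. If $f$ is bounded by a cycle, then $d(f)\in\{4,5,7\}$ is immediately a forbidden cycle, so assume the boundary is not a cycle. Solving $d(f)=\sum(\text{cycle lengths})+2(\#\,\text{bridge edges})$ with cycle summands in $\{3,6\}$ shows: for $d(f)=4$ or $5$ the only solutions force a pendant bridge ending at a $1$-vertex; for $d(f)=7$ the only solution is one triangle together with two bridge-edges, which again terminates at a $1$-vertex; in every case this contradicts $\delta(G)\ge 3$. Hence there are no $4$-, $5$-, or $7$-faces. For $d(f)=6$ the same bookkeeping leaves exactly two possibilities when the boundary is not a cycle: a tree of three bridge-edges (a $1$-vertex, excluded) or two triangles meeting at a cut vertex (distance $0$, excluded); therefore every $6$-face is bounded by a $6$-cycle.

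For part (b), note that after part (a) the only $7^{-}$-faces are $3$-faces and $6$-faces, and a $3$-face is necessarily bounded by a triangle. Two adjacent $3$-faces share an edge, so the two triangles share two vertices and lie at distance $0$, contradicting the distance hypothesis. If a $3$-face $uvw$ is adjacent to a $6$-face along $uv$, then the $6$-face is a $6$-cycle by part (a); replacing the edge $uv$ in that $6$-cycle by the path $u\,w\,v$ produces a closed walk of length $7$. When $w$ avoids the $6$-cycle this is a genuine $7$-cycle, which is forbidden; when $w$ lies on the $6$-cycle, the edge $uw$ or $wv$ becomes a chord splitting the $6$-cycle into a forbidden $4$- or $5$-cycle. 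Either way we reach a contradiction, so a $3$-face is adjacent to no $7^{-}$-face.

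The main obstacle I anticipate is not the arithmetic, which is immediate, but making the facial-walk decomposition rigorous: one must argue carefully that a non-cyclic boundary of a face in a connected (not necessarily $2$-connected) plane graph really does decompose into its block-cycles joined by bridge-trees, and that a bridge traversed twice on $f$ without an intervening excursion forces its far endpoint to be a leaf. Once this bookkeeping is pinned down, the parity together with the $\{3,6\}$-only restriction on short cycles closes every case quickly, and the distance hypothesis disposes of the remaining two-triangle configurations.
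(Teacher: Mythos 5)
Your proof is correct and follows essentially the same route as the paper: rule out $4$-, $5$-, $7$-faces and non-cyclic $6$-faces by decomposing facial boundaries against the forbidden cycle lengths, the minimum-degree condition, and the triangle-distance hypothesis, then handle part (b) via chordlessness of short cycles and the absence of $7$-cycles. Your explicit bookkeeping of bridge-edges in the boundary walk is slightly more careful than the paper, which simply asserts that a non-cyclic $7$-face boundary must be a triangle plus a $4$-cycle, but the substance is the same.
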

\begin{proof}
\ref{NOADJ-1} Since every $4$-face must be bounded by a $4$-cycle, but there is no $4$-cycles in $G$, this implies that there is no $4$-faces in $G$. Similarly, there is no $5$-faces in $G$. Since there is no $7$-cycles in $G$, there is no $7$-face bounded by a cycle, and then the boundary of every $7$-face must consist of a triangle and a $4$-cycle, but this contradicts the absence of $4$-cycles. If the boundary of a $6$-face is not a cycle, then it must consist of two triangles, and the distance of these two triangles is zero, a contradiction. Therefore, every $6$-face is bounded by a $6$-cycle. 

\ref{NOADJ-2} It is easy to check that every $8^{-}$-cycle is chordless. Since there is no two triangles at distance less than two, there is no two adjacent $3$-faces. Every $6$-face is bounded by a $6$-cycle and it is chordless, thus a $3$-face cannot be adjacent to a $6$-face, for otherwise they form a $7$-cycle with a chord, a contradiction. 
\end{proof}

Define the initial charge function $\mu(x)$ on $V \cup F$ to be $\mu(v)=d(v)-6$ for $v\in V$ and $\mu(f)=2d(f)-6$ for $f\in F$. By Euler's formula, we have the following equality, 
\[
\sum_{v\in V(G)}(d(v) - 6) + \sum_{f\in F(G)}(2d(f) - 6) = -12. 
\]

Next, we define some discharging rules to change the initial charge function $\mu(x)$ to the final charge function $\mu'(x)$ on $V\cup F$ such that $\mu'(x)\geq0$ for all $x\in V\cup F$. This leads to a contradiction, and then we complete the proof.

\begin{enumerate}[label = \bf R\arabic*]
  \item\label{Ru-2-} Each $6^+$-face sends $1$ to each incident strong $3$-vertex, $\frac{1}{2}$ to each incident rich $4$-vertex, $\frac{1}{4}$ to each incident $5$-vertex.
  \item\label{Ru-3-} Each $8^+$-face sends $\frac{3}{2}$ to each incident weak $3$-vertex, $\frac{3}{4}$ to each incident semi-rich $4$-vertex.
\end{enumerate}
It remains to check that the final charge of every element in $V\cup F$ is nonnegative.

$\bullet$ Let $v$ be an arbitrary vertex of $G$. 

If $v$ is a $6^{+}$-vertex, then it is not involved in the discharging procedure, hence $\mu'(v) = \mu(v) =  d(v) - 6 \geq 0$. We may assume that $3 \leq d(v) \leq 5$. Since there is no two triangles at distance less than two, every vertex is incident with at most one 3-face. 

Suppose that $v$ is a $3$-vertex. If $v$ is not incident with any $3$-face, then it is incident with three $6^{+}$-faces, and then $\mu'(v) = \mu(v) + 3 \times 1 = 0$. If $v$ is incident with a 3-face, then the other two incident faces are $8^{+}$-faces by \autoref{NOADJ}\ref{NOADJ-2}, and then $\mu'(v) = \mu(v) + 2 \times \frac{3}{2} = 0$. 

Suppose that $v$ is a $4$-vertex. If $v$ is not incident with any $3$-face, then it is incident with four $6^{+}$-faces, and then $\mu'(v) =  \mu(v) + 4 \times \frac{1}{2} = 0$. If $v$ is incident with a $3$-face, then $\mu'(v) = \mu(v) + 2 \times \frac{3}{4} + \frac{1}{2} = 0$. 

Suppose that $v$ is a 5-vertex. Since $v$ is incident with at most one $3$-face, it is incident with at least four $6^{+}$-faces, so $\mu'(v) \geq \mu(v) + 4 \times \frac{1}{4} = 0$. 

$\bullet$ Let $f$ be an arbitrary face in $F(G)$. 

Note that there is no $4$-, $5$-, $7$-faces. Since every 3-face $f$ is not involved in the discharging procedure, we have that $\mu'(f) = \mu(f ) = 0$. By \autoref{NOADJ}\ref{NOADJ-2}, every $6$-face $f$ is adjacent to six $6^{+}$-faces, thus $\mu'(f) \geq \mu(f) - 6 \times 1 = 0$. Suppose that $f$ is a $d$-face with $d \geq 8$. Since the distance of triangles is at least two, we have that $f$ is adjacent to at most $\lfloor\frac {d}{3}\rfloor$ triangular-faces, thus it is incident with at most $2\times\lfloor\frac {d}{3}\rfloor$ weak $3$-vertices. Hence, $\mu'(f) \geq 2d - 6 - 2\times\lfloor\frac {d}{3}\rfloor \times \frac{3}{2} - (d - 2\times\lfloor\frac {d}{3}\rfloor) \times 1 = d - 6 - \lfloor\frac {d}{3}\rfloor \geq 0$. 
\end{proof}

\vskip 0mm \vspace{0.3cm} \noindent{\bf Acknowledgments.} This work was supported by the National Natural Science Foundation of China and partially supported by the Fundamental Research Funds for Universities in Henan (YQPY20140051).

\end{document}